\journal{Journal of CMAME Templates}
\def\af{\alpha}
\def\p{\partial}
\def\md{\mathcal{D}}
\newcommand{\zd}{\,\mathrm{d}}
\newtheorem{theorem}{Theorem}[section]
\newtheorem{lemma}[theorem]{Lemma}
\theoremstyle{definition}
\newtheorem{definition}[theorem]{Definition}
\theoremstyle{remark}
\theoremstyle{example}
\newtheorem{example}[theorem]{Example}
\def\vp{\varphi}
\begin{document}

\begin{frontmatter}

\title{Adaptive Finite Element Method for Fractional Differential Equations using Hierarchical Matrices\tnoteref{mytitlenote}}
\tnotetext[mytitlenote]{This work was supported by the OSD/ARO/MURI on
"Fractional PDEs for Conservation Laws and Beyond: Theory, Numerics and Applications
(W911NF-15-1-0562)" .}



\author[mymainaddress,mysecondaryaddress]{Xuan Zhao\corref{mycorrespondingauthor}}
\cortext[mycorrespondingauthor]{Corresponding author}
\ead{xuanzhao11@seu.edu.cn}

\author[tufts]{Xiaozhe Hu}
\ead{xiaozhe.hu@tufts.edu}

\author[mymainaddress,uncc]{Wei Cai}
\ead{wcai@csrc.ac.cn}

\author[Brown]{George Em Karniadakis}
\ead{george\_karniadakis@brown.edu}

\address[mymainaddress]{Division of Algorithms, Beijing Computational Science Research Center, Beijing 100084, P. R. China}
\address[mysecondaryaddress]{School of Mathematics, Southeast University, Nanjing 210096, P. R. China}
\address[tufts]{Department of Mathematics, Tufts University, Medford, MA 02155 USA}
\address[uncc]{Department of Mathematics and Statistics, University of North Carolina at Charlotte, Charlotte, NC 28223-0001}
\address[Brown]{Division of Applied Mathematics, Brown University, Providence, RI 02912, USA}

\begin{abstract}
A robust and fast solver for the fractional differential equation (FDEs) involving the Riesz fractional derivative is developed using an adaptive finite element method on non-uniform meshes. It is based on the utilization of hierarchical matrices ($\mathcal{H}$-Matrices) for the representation of the stiffness matrix resulting from the finite element discretization of the FDEs. We employ a geometric multigrid method for the solution of the algebraic system of equations. We combine it with an adaptive algorithm based on a posteriori error estimation to deal with general-type singularities arising in the solution of the FDEs. Through various test examples we demonstrate the efficiency of the method and the high-accuracy of the numerical solution even in the presence of singularities. The proposed technique has been verified effectively through fundamental examples including Riesz, Left/Right Riemann-Liouville fractional derivative and, furthermore, it can be readily extended to more general fractional differential equations with different boundary conditions and low-order terms. To the best of our knowledge, there are currently no other methods for FDEs that resolve singularities accurately at linear complexity as the one we propose here.
\end{abstract}

\begin{keyword}
Riesz fractional derivative, Hierarchical Matrices, geometric multigrid method, adaptivity, non-smooth solutions, finite element method
\end{keyword}

\end{frontmatter}


\section{Introduction}
Numerical methods for differential equations involving fractional derivatives either in time or space have been studied widely since they have various new scientific applications \cite{Re+2014+PRL,Sabatelli+2002+EP,Shlesinger+1987+PRL,Sun+2009+PA,Mainardi+2014,Samko+1993,Podlubny+1999}. However, as the size of the problem increases, the time required to solve the final system of equations increases considerably due to the nonlocality of the fractional differential operators  \cite{Yang+2010+AMM,Muslih+2010+JTP,Roop+2006+NMPDE,Sugimoto+1991+JFM,Roop+2006+JCAM,Deng+2008+SINUM,Bu+2014+JCP,Tian+2015+MC,Wang+2012+SISC,Wang+2011+JCP,Pang+2015+JCP}. Generally, there are two main difficulties in solving space fractional problems. First, the discretization matrix obtained by the utilization of the numerical methods is fully populated. This leads to increased storage memory requirements as well as increased solution time. Since the matrix is dense, the memory required to store its coefficients is of order $O(N^2)$, where $N$ denotes the number of unknowns, and the solution of the system requires $O(N^3)$ operations if direct solvers are used. Second, the solution of a space fractional problem has singularities around the boundaries even with smooth input data since the definition involves the integration of weak singular kernels.

So far good progress has been made on reducing the computational complexity  of the space FDEs for uniform mesh discretizations based on the 
observation that the coefficient matrices are Toeplitz-like if a {\em uniform} mesh is employed. This results in  efficient matrix-vector multiplications  \cite{Lin+2014+JCP,Pang+2012+JCP,Wang+2010+JCP,Wang+2013+JCP,jia2016fast}, which in conjunction with 
effective preconditioners lead to enhanced efficiency \cite{Lei+2013+JCP,WangDu+2013+JCP,Jia+2015+JCP,Moroney+2013+CMA,Pan+2014+SISC}. 
The multigrid method \cite{Pang+2012+JCP,Zhou+2013+JAM,Chen+2014+BIT,Xu+2015+MG} has also been employed to reduce the computational cost to 
$O(N\log (N))$.  However, to the best of our knowledge, most of the existing fast solvers depend on the Toeplitz-like structure of the matrices, which implies that the underlying meshes have to be uniform.  Therefore, the existing efficient solvers cannot be readily applied when a nonuniform mesh is used, including the important case of non-uniform (geometric) meshes generated by adaptive discretizations employed to deal with boundary singularities. 
Another effective approach in dealing with such singularities is by tuning the appropriate basis, e.g.. in Galerkin and collocation spectral methods. For example, the weighted Jacobi polynomials can be used to accommodate the weak singularity  if one has some information about the solution \cite{Zayernouri+2014+SISC,Zeng+2015+SISC,Chen+2014+ar,Zhao+2015+ar}; the proper choice of the basis results in significant improvement in the accuracy of numerical solutions. However, in the current work we assume that we do not have any information on the non-smoothness of the solution.

Some authors considered the multi-term space fractional differential equations and the factional initial value problem involving Rieman-Liouville fractional derivatives \cite{Chen+2014+ar,JiangJMAA2012,Zayernouri+2014+SISC}, and discussed the singular property of the solutions. 
As a general example, we consider the factional
initial value problem of order $1<\af<2$ involving the Riesz fractional derivative
\begin{align}
\md^\af_xu(x)=f(x),\,x\in(b,c) ,\label{Pro1}
\end{align}
where $ f \in L^2([b,c])$

$\mathcal{H}$-Matrices \cite{Hackbusch+1999+C,lin2011fast} have been developed over the last twenty years as a powerful data-sparse approximation of dense matrices. 
This representation has been used for solving integral equations and elliptic partial differential equations \cite{Bebendorf+2000+NM,Borm+2005+NM,Ho+2015+CPAM,schmitz2014fast}. The main advantage of $\mathcal{H}$-Matrices is the reduction of storage requirement, e.g. when storing a dense matrix, which requires $O(N^2)$ units of storage, while $\mathcal{H}$-Matrices provide an approximation requiring only $O(N k\,\log(N))$ units of storage, where $k$ is a parameter controlling the accuracy of the approximation.

In this work, instead of using the Topelitz-like structure of the matrices to reduce the computational complexity, we adapt the $\mathcal{H}$-Matrices representation to approximate the dense matrices arising from the discretization of the FDEs.  Our $\mathcal{H}$-Matrices approach does not restrict to the uniform meshes and can be easily generalized to the non-uniform meshes Therefore, it is suitable for the adaptive finite element method (AFEM) for FDEs.  We will show theoretically that the error of such $\mathcal{H}$-Matrices representation decays like $\mathcal{O}(3^{-k})$ while the storage complexity is $O(N k\,\log(N))$.  Moreover, in order to solve the linear system involving $\mathcal{H}$-Matrices efficiently, we develop a geometric multigrid (GMG) method based on the $\mathcal{H}$-Matrices representations and the resulting GMG method converges uniformly, which implies that the overall computational complexity for solving the linear system is $O(N k\,\log(N))$.  Since our $\mathcal{H}$-Matrices and GMG methods can be applied to non-uniform meshes, we also designed an adaptive finite element method (AFEM) for solving the FDEs. Similar to the standard AFEM for integer-order partial differential equations, our AFEM algorithm involves four main modules: \texttt{SOLVE}, \texttt{ESTIMATE}, \texttt{MARK}, and \texttt{REFINE}.  Here, the $\mathcal{H}$-Matrices approach and GMG method are used in the \texttt{SOLVE} module to reduce the computational cost.  An a posteriori error estimator based on gradient recovery approach is applied in the \texttt{ESTIMATE} module.  Standard D\"{o}flers marking strategy and bisection refinement are employed in the \texttt{MARK} and \texttt{REFINE}, respectively.  Thanks to the newly designed algorithm for solving the linear system of equations, the new AFEM for FDEs achieves the optimal computational complexity, while it also obtains the optimal convergence order.  The key to such AFEM algorithm for FDEs is the optimal linear solver we developed based on $\mathcal{H}$-Matrices representation and the GMG method.

The remainder of the paper is structured as follows.  In Section \ref{sec:preliminaries}, we introduce the  FDE considered in this
work.  Its finite element discretization and $\mathcal{H}$-Matrices representation are discussed in Section \ref{sec:H-mat}.  In Section \ref{sec:MG-Hmat}, we introduce the GMG method based on the $\mathcal{H}$-Matrices representation.  The overall AFEM algorithm is discussed in detail in Section \ref{sec:AFEM-FPDEs}, and numerical experiments are presented in Section \ref{sec:numerics} to demonstrate the high efficiency and accuracy of the proposed new method.

\section{Preliminaries} \label{sec:preliminaries}
In this section, we present some notations and lemmas which will be used in the following sections.

\begin{definition}
The fractional integral  of order  $\af,$ which is a complex number in the half-plane $Re(\af) > 0$, for the function $f(x)$ is  defined as
\begin{align*}
(\prescript{}{b}{\mathcal{I}}_{x}^{\af}f)(x)=\frac1{\Gamma(\af)}\int_{b}^x\frac{f(s)}{(x-s)^{1-\af}}\zd s,\quad x>b.
\end{align*}
\end{definition}

\begin{definition}\label{def_c}
The Caputo fractional derivative of order $\af\in(1,2)$ for the function $f(x)$ is  defined as
\begin{align*}
(\prescript{C}{b}{\mathcal{D}}_{x}^{\af}f)(x)=\prescript{}{b}{\mathcal{I}}_{x}^{2-\af}\left[\frac{\zd^2 }{\zd x^2}f(x)\right]
=\frac1{\Gamma(2-\af)}\int_{b}^x\frac{f^{\prime\prime}(s)}{(x-s)^{\af-1}}\zd s,\quad x>b.
\end{align*}
\end{definition}

\begin{definition}
The Left Riemann-Liouville fractional derivative of order  $\af\in(1,2)$ for the function $f(x)$ is  defined as
\begin{align*}
(\prescript{RL}{b}{\mathcal{D}}_{x}^{\af}f)(x)=\frac{\zd^2 }{\zd x^2}\left[(\prescript{}{b}{\mathcal{I}}_{x}^{2-\af}f)(x)\right]=\frac1{\Gamma(2-\af)}\frac{\zd^2 }{\zd x^2}\int_{b}^x\frac{f(s)}{(x-s)^{\af-1}}\zd s,\quad x>b.
\end{align*}
\end{definition}

\begin{definition}
The Right Riemann-Liouville fractional derivative of order  $\af\in(1,2)$ for the function $f(x)$ is  defined as
\begin{align*}
(\prescript{RL}{x}{\mathcal{D}}_{c}^{\af}f)(x)=\frac{1}{\Gamma(1-\af)}\left(-\frac{\zd^2 }{\zd x^2}\right)\int_{x}^{c}\frac{f(s)}{(s-x)^{\af-1}}\zd s,\quad x<c.
\end{align*}
\end{definition}

\begin{definition}\cite{Samko+1993} The Riesz fractional derivative of order  $\af\in(1,2)$ for the function $f(x)$ is  defined as
\begin{align*}
\md^\af_xf(x)&=-\frac1{2\cos(\af\pi/2)\Gamma(2-\af)}\frac{\zd^2}{\zd
x^2}\int_{b}^{c}|x-\xi|^{1-\af}f(\xi)\zd\xi\\
&=-\frac1{2\cos(\af\pi/2)}\left[ \prescript{RL}{b}{\mathcal{D}}_{x}^{\alpha}f(x)+\prescript{RL}{x}{\mathcal{D}}_{c}^{\alpha}f(x)\right].
\end{align*}
\end{definition}

\section{Discretization of the problem based on $\mathcal{H}$-Matrices representation} \label{sec:H-mat}
We consider the general model Eq. \eqref{Pro1} subject to the boundary conditions $u(b)=0,\;u(c)=0.$  Following the Galerkin approach, we solve equation \eqref{Pro1} projected onto the finite dimensional space $\mathcal{V}:=\text{span}\{\vp_1,\cdots,\vp_{N}\}$ and $\mathcal{V} \subset H^1_0([b,c])$, where $H^1_0([b,c])$ is the standard Sobolev space on $[b,c]$ and $\{\vp_i\}$ are standard piecewise linear basis functions defined on a mesh $b = x_0 < x_1 < \cdots < x_N < x_{N+1} = c$ with meshsize $h_i = x_{i+1} - x_i$, $i = 1,2, \cdots, N$.  We multiply $v \in \mathcal{V}$ by \eqref{Pro1} and integrate over $[b,c]$, 
\begin{align}
\int_{b}^{c}\md^\af_xu(x)\vp_i(x)\zd x=\int_{b}^{c} f(x)\vp_i(x)\zd x.
\end{align}
By integration by parts, we obtain the following weak formulation of \eqref{Pro1}: find $u(x) \in \mathcal{V}$, such that
\begin{align*}
\int_{b}^{c}  \left[\frac1{2 c(\af)}\frac{\zd}{\zd
x}\int_{b}^{c}|x-\xi|^{1-\af}u(\xi)\zd\xi  \right]    v^\prime(x)\zd x=\int_{b}^{c}f(x) v(x)\zd x, \quad \forall v \in \mathcal{V}
\end{align*}
where $c(\af)=\cos(\af\pi/2)\Gamma(2-\af).$

We rewrite the discrete solution $u_n=\sum_{j=1}^{N}u_j\vp_j \in \mathcal{V}$ and then the coefficient vector $\bm{u} = (u_1, u_2, \cdots, u_N)$ is the solution of the linear system
$$A  \bm{u}= \bm{f},$$
where 
\begin{align}
A _{ij}:=\int_{b}^{c}  \left[\frac1{2\cos(\af\pi/2)\Gamma(2-\af)}\frac{\zd}{\zd
x}\int_{b}^{c}|x-\xi|^{1-\af}\vp_j(\xi)\zd\xi  \right]    \vp^\prime_i(x)\zd x,
\end{align}
and 
\begin{align}
f_{i}:=\int_{b}^{c}  \vp_i(x)f(x)\zd x.
\end{align}

The matrix $A $ is dense as all entries are nonzero. Our aim is to approximate $A $ by a matrix $\tilde{A }$ which can be stored in a data-sparse (not necessarily sparse) format. The idea is to replace the kernel $\mathcal{S}(x,\xi)=|x-\xi|^{1-\alpha}$ by a degenerate kernel
\begin{align}
\tilde{\mathcal{S}}(x,\xi)=\sum_{\nu=0}^{k-1}p_\nu(x)q_\nu(\xi).
\end{align}

\subsection{Taylor Expansion of the Kernel}

Let $\tau:=[a^\prime,b^\prime]$, $\sigma:=[c^\prime,d^\prime]$, $\tau\times\sigma\subset[c,d]\times[c,d]$ be a subdomain with the property $b^\prime<c^\prime$ such that the intervals are disjoint: $\tau\cap\sigma=\emptyset.$ Then the kernel function is nonsingular in $\tau\times\sigma.$

\begin{lemma}[Derivative of left/right kernel]
\begin{align*}
\p_x^\nu\left[ (x-\xi)^{1-\alpha}\right]&=(-1)^{\nu}\prod_{l=1}^\nu (\af+l-2)(x-\xi)^{1-\af-\nu},\\
\p_x^\nu\left[ (\xi-x)^{1-\alpha}\right]&=\prod_{l=1}^\nu (\af+l-2)(\xi-x)^{1-\af-\nu}.
\end{align*}
\end{lemma}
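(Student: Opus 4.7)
My plan is to prove both identities by induction on $\nu \in \mathbb{N}$, since each side is built up from a single derivative applied repeatedly. The two statements have nearly identical structure, with the only asymmetry being the sign picked up from $\partial_x(\xi-x) = -1$ versus $\partial_x(x-\xi) = +1$; the $(-1)^\nu$ factor in the first identity versus its absence in the second is exactly the bookkeeping of these signs.

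For the base case $\nu=1$, I would just compute directly: $\partial_x(x-\xi)^{1-\af} = (1-\af)(x-\xi)^{-\af}$, which equals $(-1)^1(\af-1)(x-\xi)^{1-\af-1}$, matching the right-hand side since the product $\prod_{l=1}^1(\af+l-2) = \af - 1$. The analogous computation for $(\xi-x)^{1-\af}$ produces $(\af-1)(\xi-x)^{-\af}$ through an extra factor of $-1$ from the chain rule, cancelling the $(-1)$ from $(1-\af)$ and leaving no $(-1)^\nu$ prefactor.

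For the inductive step, assuming the identity holds for some $\nu \geq 1$, I would differentiate once more. In the first identity, applying $\partial_x$ to $(x-\xi)^{1-\af-\nu}$ yields $(1-\af-\nu)(x-\xi)^{-\af-\nu} = -(\af+\nu-1)(x-\xi)^{1-\af-(\nu+1)}$. The new factor $-(\af+\nu-1)$ absorbs into the product as the $l = \nu+1$ term (since $\af + (\nu+1) - 2 = \af + \nu - 1$), with the extra $-1$ upgrading $(-1)^\nu$ to $(-1)^{\nu+1}$. The second identity is handled analogously, except that differentiating $(\xi-x)^{1-\af-\nu}$ produces $-(1-\af-\nu)(\xi-x)^{-\af-\nu} = (\af+\nu-1)(\xi-x)^{1-\af-(\nu+1)}$, so no additional sign appears and the formula retains its sign-free form.

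I do not anticipate any genuine obstacle; this is a direct induction, and the only care needed is in the index alignment of the product $\prod_{l=1}^{\nu+1}(\af+l-2)$ relative to $\prod_{l=1}^\nu(\af+l-2)$ when absorbing the newly generated factor. The result can also be viewed as a restatement of the Pochhammer-symbol identity for repeated differentiation of a power function, so an alternative (and even shorter) route would be to quote $\partial_x^\nu (x-\xi)^{\beta} = \beta(\beta-1)\cdots(\beta-\nu+1)(x-\xi)^{\beta-\nu}$ with $\beta = 1-\af$ and then rewrite the falling factorial $(1-\af)(\,-\af\,)\cdots(2-\af-\nu)$ as $(-1)^\nu \prod_{l=1}^{\nu}(\af+l-2)$.
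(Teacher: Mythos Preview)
Your induction argument is correct and complete; the sign bookkeeping and the absorption of the new factor $\af+\nu-1$ into the product as the $l=\nu+1$ term are handled accurately. The paper itself states this lemma without proof, treating it as an elementary identity, so there is no argument to compare against; your write-up (or the Pochhammer-style shortcut you mention at the end) would fill that gap cleanly.
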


Then we can use the truncated Talyor series at $x_0:=(a^\prime+b^\prime)/2$ to approximate the kernel and eventually obtain an approximation of the stiffness matrix.

\begin{align}
\tilde{\mathcal{S}}(x,\xi)&:=\sum_{\nu=0}^{k-1}\frac{1}{\nu!}\left[\prod_{l=1}^\nu (\af+l-2)(\xi-x_0)^{1-\af-\nu}\right](x-x_0)^\nu\label{KerTay}\\
&:=\sum_{\nu=0}^{k-1}p_\nu(x)q_\nu(\xi),
\end{align}
where 
\begin{align}
p_\nu(x)&=(x-x_0)^\nu,\\
q_\nu(\xi)& =\frac{1}{\nu!}\prod_{l=1}^\nu (\af+l-2)(\xi-x_0)^{1-\af-\nu} .
\end{align}
\subsection{ Low rank approximation of Matrix Blocks}
On certain subdomains (the condition will be made explicitly later), we can approximate the kernel $\mathcal{S}$ by the truncated Taylor series $\tilde{\mathcal{S}}$ from \eqref{KerTay} and replace the matrix entries $A _{ij}$ by the use of the degenerate kernel $\tilde{\mathcal{S}}(x,\xi)$ for the indices $(i,j)\in t\times s:$
\begin{align} \label{def:tildeA_ij}
\tilde{A }_{ij}&=\int_{b}^{c}  \left[\frac1{2\cos(\af\pi/2)\Gamma(2-\af)}\frac{\zd}{\zd
x}\int_{b}^{c}  \tilde{\mathcal{S}}(x,\xi)   \vp_j(\xi)\zd\xi  \right]    \vp^\prime_i(x)\zd x,
\end{align}
in which the double integral is separated into two single integrals:
\begin{align}
\tilde{A }_{ij}&=\int_{b}^{c}  \left[\frac1{2\cos(\af\pi/2)\Gamma(2-\af)}\frac{\zd}{\zd
x}\int_{b}^{c} \sum_{\nu=0}^{k-1}p_\nu(x)q_\nu(\xi) \vp_j(\xi)\zd\xi  \right]    \vp^\prime_i(x)\zd x\\
&=\frac1{2\cos(\af\pi/2)\Gamma(2-\af)}\sum_{\nu=0}^{k-1}\left[ \int_{b}^{c} p^\prime_\nu(x) \vp^\prime_i(x)\zd x \right] \left[  \int_{b}^{c} q_\nu(\xi) \vp_j(\xi)\zd\xi   \right]  
\end{align}

Thus, the submatrix $A |_{t\times s}$ can be represented in a factorized form

$$A |_{t\times s}=  \frac{1}{2\cos(\af\pi/2)\Gamma(2-\af)} CR^T,\quad C\in \mathbb{R}^{t\times\{0,\cdots,k-1\}},\quad R\in \mathbb{R}^{s\times\{0,\cdots,k-1\}}$$
where the entries of the matrix factors $C$ and $R$ are
\begin{align}
C_{i\nu}:= \int_{b}^{c} p^\prime_\nu(x) \vp^\prime_i(x)\zd x ,\quad R_{j\nu}:=\int_{b}^{c} q_\nu(\xi) \vp_j(\xi)\zd\xi  .
\end{align}

\subsection{$\mathcal{H}$-Matrix Representation Error Estimate}
Now we estimate the error of the $\mathcal{H}$-Matrix representation.  Here we consider the case $b' < c' (i<j)$ and the case $d' < a' (i>j)$ follows exactly the same procedure.  We first need to rewrite $C_{i\nu}$ and $R_{j\nu}$ using Taylor expansions.  For $b' < c' (i<j)$, using the following Taylor expansions with $h_i$ the length of the element $(x_i, x_{i+1})$, we have
\begin{align*}
(x_{i-1} - x_0)^{\nu} &= (x_i - x_0)^{\nu} + \nu (x_i - x_0)^{\nu-1} (-h_i)\nonumber\\
&\quad+ \frac{\nu(\nu-1)}{2!} (\xi_i - x_0)^{\nu-2} (-h_i)^2, \quad \xi_i \in [x_{i-1}, x_i], \\
(x_{i+1} - x_0)^{\nu} &= (x_i - x_0)^{\nu} + \nu (x_i - x_0)^{\nu-1} (h_{i+1}) \nonumber\\
&\quad+ \frac{\nu(\nu-1)}{2!} (\xi_{i+1} - x_0)^{\nu-2} (h_{i+1})^2, \quad \xi_{i+1} \in [x_i, x_{i+1}],
\end{align*}
we have, for $\nu \geq 2$,
\begin{equation}\label{eqn:Ainu-taylor}
C_{i \nu} = -\frac{\nu(\nu-1)}{2!}\left [(\xi_i - x_0)^{\nu-2} h_i + (\xi_{i+1} - x_0)^{\nu-2} h_{i+1}\right].
\end{equation}
Similarly, using the following Taylor expansions
\begin{align*}
&(x_{j-1} - x_0)^{3-\alpha-\nu}  = (x_j - x_0)^{3-\alpha-\nu} + (3 - \alpha - \nu) (x_j - x_0)^{2 - \alpha - \nu} (-h_j) \\ 
& \quad + \frac{(3-\alpha-\nu)(2-\alpha -\nu)}{2!} (\xi_j - x_0)^{1 - \alpha -\nu} (- h_j)^2, \quad \xi_j \in [x_{j-1}, x_j], \\ 
&(x_{j+1} - x_0)^{3-\alpha-\nu} = (x_j - x_0)^{3-\alpha-\nu} + (3 - \alpha - \nu) (x_j - x_0)^{2 - \alpha - \nu} (h_{j+1}) \\
& \quad + \frac{(3-\alpha-\nu)(2-\alpha -\nu)}{2!} (\xi_{j+1} - x_0)^{1 - \alpha -\nu} (h_{j+1})^2, \quad \xi_{j+1} \in [x_{j-1}, x_j],
\end{align*}
we have, 
\begin{equation}\label{eqn:Bjnu-taylor}
R_{j \nu} = \frac{1}{2! \nu!} \Pi_{l=1}^{\nu} (\alpha + l - 2) \left[  (\xi_j - x_0)^{1-\alpha -\nu} h_j + (\xi_{j+1} -x_0 )^{1-\alpha - \nu}  h_{j+1}  \right].
\end{equation}
Based on \eqref{eqn:Ainu-taylor} and \eqref{eqn:Bjnu-taylor}, we can analyze the element-wise error  in the following theorem.  

\begin{theorem}[Element-wise Approximation Error] \label{thm:element-wise-error}
Let $\tau := [a^\prime,b^\prime]$, $\sigma := [c^\prime,d^\prime]$, $b' < c'$, $x_0 = (a' + b')/2$, and $k\geq 2$, we have
\begin{align} \label{ine:element-wise-error}
&\quad|A _{ij} - \tilde{A }_{ij} | \nonumber \\
& \leq \frac{1}{8 c(\af)}  \frac{k(k-1)(h_i + h_{i+1})(h_j + h_{j+1})}{\left( |x_0 - a'| + |c' - b'| \right)^{\alpha-1}|x_0 -a'|^2} \left[ \frac{\rm{diam}(\tau) + 2\rm{dist}(\tau, \sigma) }{2\rm{dist}(\tau, \sigma) } \right]^3 \left[  1+  2 \frac{\rm{dist}(\tau, \sigma)}{\rm{diam}(\tau)} \right]^{-k}.
\end{align}
where $\rm{diam}(\tau)$ is the diameter of $\tau$ and $\rm{dist}(\tau, \sigma)$ is the distance between the intervals $\tau$ and $\sigma$.
If $ \frac{\rm{dist}(\tau, \sigma)}{\rm{diam}(\tau)} \geq 1 $, we have
\begin{align} \label{ine:element-wise-error-1}
 |A _{ij} - \tilde{A }_{ij} | 
\leq \frac{27}{64} \frac{1}{ c(\af)}  \frac{k(k-1)(h_i + h_{i+1})(h_j + h_{j+1})}{\left( |x_0 - a'| + |c' - b'| \right)^{\alpha-1} |x_0 - a'|^2} \left( 3 \right)^{-k}.
\end{align}
\end{theorem}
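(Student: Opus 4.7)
The plan is to represent $A_{ij}-\tilde{A}_{ij}$ as the tail of the rank-$k$ separable expansion and then bound it termwise using the exact Taylor remainder identities \eqref{eqn:Ainu-taylor} and \eqref{eqn:Bjnu-taylor}. For $(x,\xi)\in\tau\times\sigma$ with $b'<c'$, the disjointness $\tau\cap\sigma=\emptyset$ forces $|(x-x_0)/(\xi-x_0)|<1$ uniformly, so the binomial series underlying \eqref{KerTay} converges absolutely and termwise differentiation in $x$ is legal. Substituting this series and its $k$-term truncation $\tilde{\mathcal{S}}$ into the definitions of $A_{ij}$ and $\tilde{A}_{ij}$, I would obtain
\[
A_{ij}-\tilde{A}_{ij}=\frac{1}{2c(\alpha)}\sum_{\nu=k}^{\infty}C_{i\nu}R_{j\nu},
\]
which reduces the theorem to a termwise tail estimate.

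From \eqref{eqn:Ainu-taylor} I would use that the intermediate points $\xi_i,\xi_{i+1}$ lie in $\mathrm{supp}(\varphi_i)\subset\tau=[a',b']$, so $|\xi_i-x_0|,|\xi_{i+1}-x_0|\leq|x_0-a'|$, yielding
\[
|C_{i\nu}|\leq \tfrac{\nu(\nu-1)}{2}(h_i+h_{i+1})\,|x_0-a'|^{\nu-2}.
\]
Similarly, from \eqref{eqn:Bjnu-taylor} the intermediate points $\xi_j,\xi_{j+1}\in\mathrm{supp}(\varphi_j)\subset\sigma$ sit to the right of $x_0\in\tau$, giving $\xi_j-x_0,\xi_{j+1}-x_0\geq c'-x_0=|c'-b'|+|x_0-a'|$; combining this with $\prod_{l=1}^{\nu}(\alpha+l-2)\leq \nu!$ (valid for $\alpha\in(1,2)$ since each factor satisfies $\alpha+l-2\leq l$) produces
\[
|R_{j\nu}|\leq \tfrac{1}{2}(h_j+h_{j+1})\bigl(|c'-b'|+|x_0-a'|\bigr)^{1-\alpha-\nu}.
\]

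Setting $r:=|x_0-a'|/(|c'-b'|+|x_0-a'|)\in[0,1)$, the two bounds multiply to
\[
|C_{i\nu}R_{j\nu}|\leq \frac{(h_i+h_{i+1})(h_j+h_{j+1})\,\nu(\nu-1)\,r^{\nu}}{4\,|x_0-a'|^{2}\,(|c'-b'|+|x_0-a'|)^{\alpha-1}}.
\]
The core analytical step is the polynomial tail estimate $\sum_{\nu=k}^{\infty}\nu(\nu-1)r^{\nu}\leq k(k-1)r^{k}/(1-r)^{3}$ for $k\geq 2$. Shifting $\nu\mapsto m+k$ and recombining via $\sum m^{2}r^{m}$, $\sum m r^{m}$, $\sum r^{m}$ reduces the claim to showing that the quadratic $k(k-1)+2k(2-k)r+(k-1)(k-2)r^{2}$ is bounded by $k(k-1)$ on $[0,1)$, which amounts to $(k-1)r\leq 2k$ and is obvious. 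Substituting $1-r=|c'-b'|/(|c'-b'|+|x_0-a'|)$ and rewriting through $\mathrm{diam}(\tau)=2|x_0-a'|$ and $\mathrm{dist}(\tau,\sigma)=|c'-b'|$ yields \eqref{ine:element-wise-error}.

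The strengthened form \eqref{ine:element-wise-error-1} follows under the admissibility assumption $\mathrm{dist}(\tau,\sigma)/\mathrm{diam}(\tau)\geq 1$: then $(1+2\mathrm{dist}/\mathrm{diam})^{-k}\leq 3^{-k}$ and $(\mathrm{diam}+2\mathrm{dist})/(2\mathrm{dist})\leq 3/2$, so the numerical constant collapses to $\tfrac{1}{8}\cdot(3/2)^{3}=27/64$. The symmetric case $i>j$ (i.e.\ $d'<a'$) is handled identically using the second identity of the derivative lemma with kernel $(x-\xi)^{1-\alpha}$. I expect the primary obstacle to be the polynomial tail bound above: though elementary for each fixed $k$, it must be verified uniformly in $k\geq 2$ and $r\in[0,1)$; the remainder is careful bookkeeping of the supports of $\varphi_i,\varphi_j$ and of the mean-value points $\xi_i,\xi_{i+1},\xi_j,\xi_{j+1}$.
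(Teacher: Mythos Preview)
Your proposal is correct and follows essentially the same route as the paper's own proof: represent the error as the tail $\frac{1}{2c(\alpha)}\sum_{\nu\ge k}C_{i\nu}R_{j\nu}$, bound $|C_{i\nu}|$ and $|R_{j\nu}|$ via \eqref{eqn:Ainu-taylor}--\eqref{eqn:Bjnu-taylor} together with $\prod_{l=1}^\nu(\alpha+l-2)\le\nu!$, introduce the ratio $r=|x_0-a'|/(|x_0-a'|+|c'-b'|)$, and apply the polynomial tail estimate $\sum_{\nu\ge k}\nu(\nu-1)r^{\nu}\le k(k-1)r^{k}/(1-r)^{3}$ before translating to $\mathrm{diam}(\tau)$ and $\mathrm{dist}(\tau,\sigma)$. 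Your termwise bound with $r^{\nu}$ in the numerator and $|x_0-a'|^{2}(|x_0-a'|+|c'-b'|)^{\alpha-1}$ in the denominator is algebraically identical to the paper's $r^{\nu-2}/(|x_0-a'|+|c'-b'|)^{\alpha+1}$, and your verification of the quadratic inequality $(k-1)(k-2)r^{2}-2k(k-2)r+k(k-1)\le k(k-1)$ is the same step the paper carries out; you are slightly more explicit than the paper in justifying the absolute convergence that underlies the series identity for $A_{ij}$.
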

\begin{proof}
Let us consider the case $b' < c' (i<j)$, according to \eqref{eqn:Ainu-taylor} and \eqref{eqn:Bjnu-taylor}, for $\nu \geq 2$, we have
\begin{align*}
|C_{i\nu}| &\leq  \frac{\nu (\nu -1)}{2} (h_i + h_{i+1}) |x_0 - a'|^{\nu -2}, \\
|R_{j\nu}| & \leq \frac{1}{2 \nu!} \left[ \Pi_{l=1}^\nu (\alpha + l -2)  \right] (h_j + h_{j+1}) \left(  \frac{1}{|x_0 - a'| + |c' - b'|}  \right)^{\nu + \alpha - 1}.
\end{align*}
Denote $r:= \frac{|x_0 - a'|}{|x_0 - a' | + |c' - b'|} <1$ and use the fact that $\frac{\Pi_{l=1}^\nu (\alpha + l -2) }{\nu!} \leq 1$, we have
\begin{align*}
|C_{i \nu} R_{j\nu}| \leq \frac{1}{4} \frac{(h_i + h_{i+1})(h_j + h_{j+1})}{\left( |x_0 - a'| + |c' - b'| \right)^{\alpha+1}} \left[  \nu (\nu-1) r^{\nu - 2} \right].
\end{align*}
Therefore, for $k \geq 2$
\begin{align*}
|A _{ij} - \tilde{A }_{ij} | & \leq \frac{1}{2 c(\af)} | \sum_{\nu = k}^{\infty} C_{i\nu}R_{j\nu}| \leq \frac{1}{2 c(\af)}  \sum_{\nu = k}^{\infty} | C_{i\nu}R_{j\nu}|  \\
& \leq \frac{1}{8 c(\af)}  \frac{(h_i + h_{i+1})(h_j + h_{j+1})}{\left( |x_0 - a'| + |c' - b'| \right)^{\alpha+1}}  \left[\sum_{\nu=k}^{\infty} \nu(\nu-1) r^{\nu-2} \right] \\
& = \frac{1}{8 c(\af)}  \frac{(h_i + h_{i+1})(h_j + h_{j+1})}{\left( |x_0 - a'| + |c' - b'| \right)^{\alpha+1}}  \frac{(k-1)(k-2)r^2 - 2k(k-2)r + k(k-1)}{(1-r)^3} r^{k-2} \\
& \leq \frac{1}{8 c(\af)}  \frac{(h_i + h_{i+1})(h_j + h_{j+1})}{\left( |x_0 - a'| + |c' - b'| \right)^{\alpha+1}} \frac{k(k-1)}{(1-r)^3} r^{k-2} \\
&= \frac{1}{8 c(\af)}  \frac{(h_i + h_{i+1})(h_j + h_{j+1})}{\left( |x_0 - a'| + |c' - b'| \right)^{\alpha-1} | x_0 - a' |^2} \frac{k(k-1)}{(1-r)^3} r^{k} 
\end{align*}
Note that $r  = \frac{\rm{diam}(\tau)}{\rm{diam}(\tau) + 2\rm{dist}(\tau, \sigma)}$ and $k \geq 2$, we have
\begin{align*}
&|A _{ij} - \tilde{A }_{ij} | \\
& \leq \frac{1}{8 c(\af)}  \frac{k(k-1)(h_i + h_{i+1})(h_j + h_{j+1})}{\left( |x_0 - a'| + |c' - b'| \right)^{\alpha-1} |x_0 - a'|^2} \left[ \frac{\rm{diam}(\tau) + 2\rm{dist}(\tau, \sigma) }{2\rm{dist}(\tau, \sigma) } \right]^3 \left[  1+  2 \frac{\rm{dist}(\tau, \sigma)}{\rm{diam}(\tau)} \right]^{-k},
\end{align*}
which gives \eqref{ine:element-wise-error}.  

If $ \frac{\rm{dist}(\tau, \sigma)}{\rm{diam}(\tau)} \geq 1 $, we have $\frac{\rm{diam}(\tau) + 2\rm{dist}(\tau, \sigma) }{2\rm{dist}(\tau, \sigma) }  \leq \frac{3}{2}$, $ 1+  2 \frac{\rm{dist}(\tau, \sigma)}{\rm{diam}(\tau)} \geq 3$, and then
\begin{align*}
|A _{ij} - \tilde{A }_{ij} | & \leq \frac{27}{64} \frac{1}{ c(\af)}  \frac{k(k-1)(h_i + h_{i+1})(h_j + h_{j+1})}{\left( |x_0 - a'| + |c' - b'| \right)^{\alpha-1} |x_0 - a'|^2} \left( 3 \right)^{-k},
\end{align*}
which completes the proof. 
\end{proof}

In the error estimate \eqref{ine:element-wise-error}, we can see that the dominating term is $\left[  1+  2 \frac{\rm{dist}(\tau, \sigma)}{\rm{diam}(\tau)} \right]^{-k}$, which determines the decaying rate and, thus, the quality of the approximation.  If $\rm{dist}(\tau, \sigma) \rightarrow 0$, the approximation will degenerate. However, if we require $\rm{diam}(\tau) \leq \rm{dist}(\tau, \sigma)$ as in the Theorem \ref{thm:element-wise-error}, we can have a nearly uniform bound 
\begin{equation*}
|A _{ij} - \tilde{A }_{ij} |  = \mathcal{O}(3^{-k}),
\end{equation*}
where $c$ depends on the intervals and $\alpha$ weakly since it is dominated by $3^{-k}$.  Moreover, the element-wise error mainly depends on the ratio $\frac{\rm{dist}(\tau, \sigma)}{\rm{diam}(\tau)}$ if we assume $\rm{diam}(\tau) \leq \rm{dist}(\tau, \sigma)$, and the bigger the ratio the better the approximation. This is equivalent to stating that the bigger the distance $\rm{dist}(\tau, \sigma)$ compared to $\rm{diam}(\tau)$ is, the faster the approximation error decays. Therefore, we call $\tau \times \sigma$ \emph{admissible} if $\rm{diam}(\tau) \leq \rm{dist}(\tau, \sigma)$, and this is the condition that is required to be able to use low rank approximation on the subdomains. The error decays exponentially with respect to the order $k$.  Figure \ref{fig:element-wise-error} shows how the error decays with respect to k when we vary the ratio or $\alpha$; we can see clearly that the error depends strongly on the ratio rather than $\alpha$, which supports our error estimates.

\begin{figure}[ht!]
\caption{Error in the Frobenius norm ($\| A - \tilde{A} \|_{F}$, where $\| M \|_F:= \sqrt{ \sum_{i,j} |M_{ij}|^2 } $) mainly depends on the ratio rather than on $\alpha$. Left: $\text{ration} := \frac{\rm{dist}(\tau, \sigma)}{\rm{diam}(\tau)}$ with $\rm{diam}(\tau)$ the diameter of the interval $\tau$ and $\rm{dist}(\tau, \sigma)$ the distance between the intervals $\tau$ and $\sigma$; Right: $\alpha \in (1,2)$ is the order of the fractional derivative.} 
\label{fig:element-wise-error}
\includegraphics[width=0.49\textwidth]{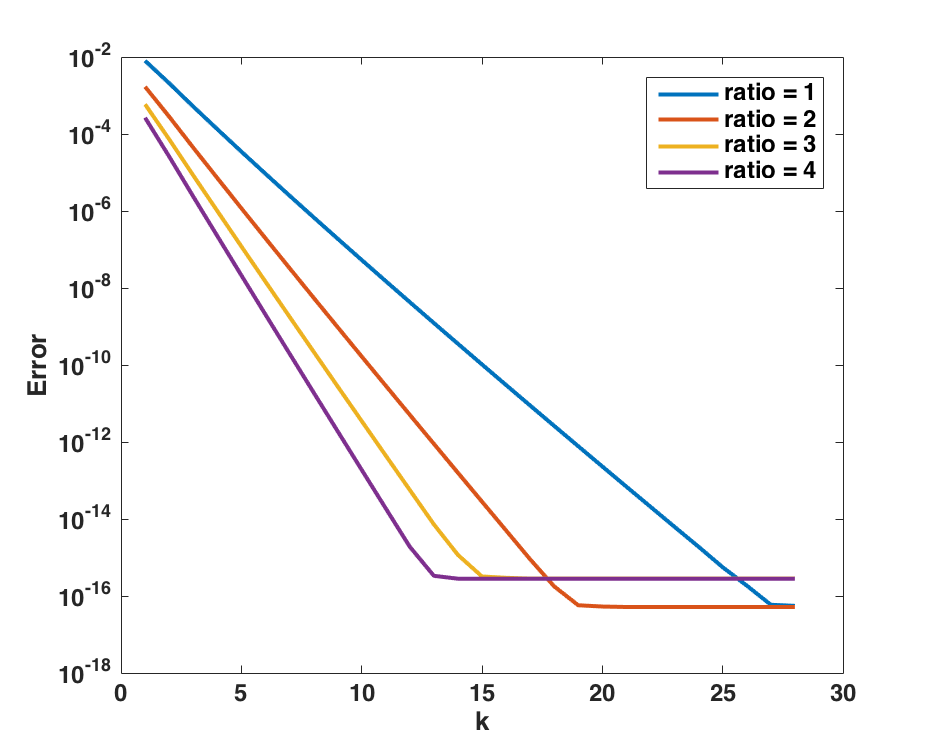} \quad
\includegraphics[width=0.49\textwidth]{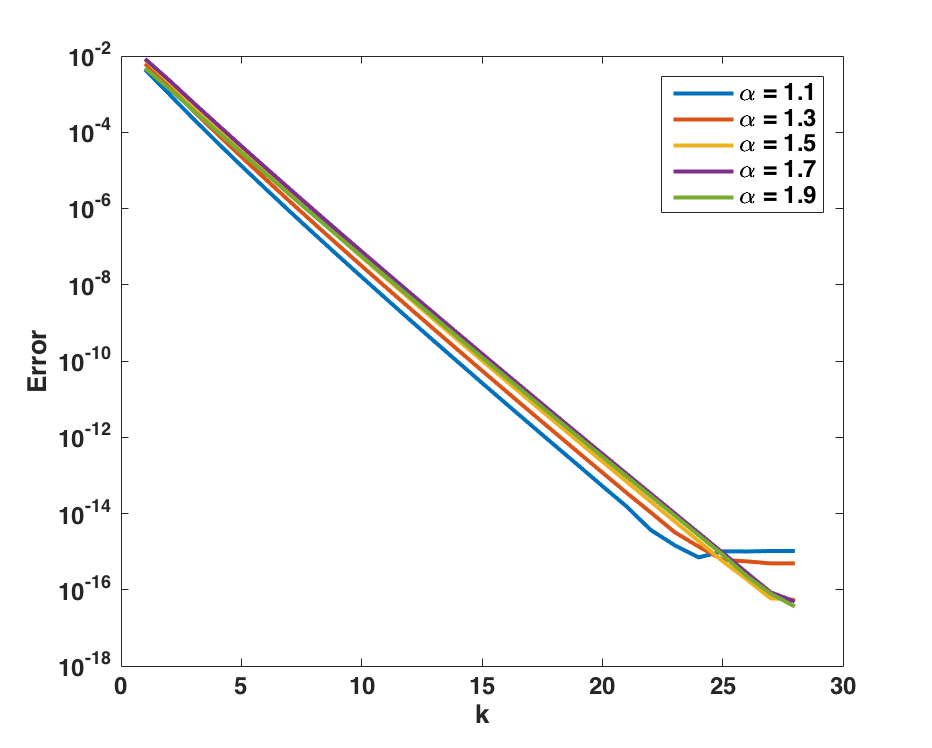}
\end{figure}

\section{Geometric Multigrid Method based on $\mathcal{H}$-Matrices} \label{sec:MG-Hmat}
In this section, we discuss how we solve the linear system of equations in the $\mathcal{H}$-Matrix format.  Although many existing fast linear solvers based on the $\mathcal{H}$-Matrix format can be applied directly, for example, Hierarchical inversion and $\mathcal{H}$-Matrix LU decomposition (see, e.g. \cite{Bebendorf.M2008a}), we will design the geometric multigrid (GMG) method based on the $\mathcal{H}$-matrix format and use GMG method to solve the linear system.  The reason is the following.  Firstly, since we are solving discrete systems resulting from differential equations, the GMG method is known to be one of the optimal methods and suitable for large-scale problems.  In \cite{Xu+2015+MG}, GMG methods have been introduced to FDEs discretized on uniform grids.  Therefore, it is natural to take advantage of the $\mathcal{H}$-matrices and generalize GMG methods for FDEs in higher dimensions discretized on non-uniform grids.  Secondly, our ultimate goal is to design adaptive finite element methods for solving FDEs, therefore, hierarchical grids are available due to the adaptive refinement procedure (which will be made clear in Section \ref{sec:AFEM-FPDEs}); hence, we can use those nested unstructured grids and design GMG methods accordingly.  Next, we will present the GMG algorithms.

As usual, the multigrid method is built upon the subspaces that are defined on nested sequences of triangulations.  We assume that we start with an initial grid $\mathcal{T}_0$ and a nested sequence of grids $\{ \mathcal{T}_\ell \}_{\ell=0}^J$, where $\mathcal{T}_\ell$ is obtained by certain refinement procedure of $\mathcal{T}_{\ell-1}$ for $\ell>0$, i.e., 
\begin{equation*}
\mathcal{T}_0 \leq \mathcal{T}_1 \leq \cdots \leq \mathcal{T}_J = \mathcal{T}.
\end{equation*}
Let $\mathcal{V}_\ell$ denote the corresponding linear finite element space based on $\mathcal{T}_\ell$.  We thus get a sequence of multilevel spaces 
\begin{equation*}
\mathcal{V}_0 \subset \mathcal{V}_1 \subset \cdots \subset \mathcal{V}_J = \mathcal{V}.
\end{equation*}
Note that, a natural space decomposition of $\mathcal{V}$ is $\mathcal{V} = \sum_{\ell=0}^J \mathcal{V}_\ell$ and this is not a direct sum.  Based on these finite element spaces, we have the following linear system of equations on each level:
\begin{equation} \label{eqn:linear-system}
A_\ell u_\ell = f_\ell, \quad \ell = 0, 1, \cdots, J.
\end{equation}
Correspondingly, we also have their $\mathcal{H}$-Matrix approximation on each level
\begin{equation} \label{eqn:H-mat-linear-system}
\tilde{A}_\ell \tilde{u}_\ell = f_\ell, \quad \ell= 0,1, \cdots, J,
\end{equation}
where $\tilde{A}_\ell$ is the $\mathcal{H}$-Matrix representation of $A$ as defined entry-wise by~\eqref{def:tildeA_ij}.

In practice, we will solve \eqref{eqn:H-mat-linear-system} based on the GMG method. Because $\tilde{A}_\ell$ provides a good approximation to $A_\ell$ on each level $\ell$, we can expect that $\tilde{u}_\ell$ provides a good approximation to $u_\ell$ on each level $\ell$ based on the standard perturbation theory of solving linear systems of equations \cite{Golub.G;Van-Loan.C1996}.  In order to define the GMG method, we need to introduce the standard prolongation $I_\ell$ on level $\ell$, which is the matrix representation of the standard inclusion operator from $\mathcal{V}_{\ell-1} = \text{span}\{  \varphi_1^{\ell-1}, \varphi_2^{\ell-1}, \cdots, \varphi_{N_{\ell-1}}^{\ell-1} \}$ to $\mathcal{V}_\ell = \text{span}\{  \varphi_1^{\ell}, \varphi_2^{\ell}, \cdots, \varphi_{N_{\ell}}^{\ell} \} $ since $\mathcal{V}_{\ell-1} \subset \mathcal{V}_\ell$, e.g., $I_\ell \in \mathbb{R}^{N_{\ell} \times N_{\ell-1}}$ such that,
\begin{equation} \label{def:prolongation}
(I_{\ell})_{ij} = \beta_{ij}, \ \text{where} \  \varphi_{j}^{\ell-1} = \sum_{i = 1}^{N_{\ell}} \beta_{ij} \varphi_{i}^{\ell}, \ j = 1, \cdots, N_{\ell-1}.
\end{equation}
Now we can define the standard $V$-cycle GMG method for solving \eqref{eqn:H-mat-linear-system} by the following recursive (Algorithm 1). 


\begin{algorithm}
\caption{V-cycle multigrid method for $\mathcal{H}$-Matrix $\tilde{A}_\ell$} \label{alg:V-cycle-H-mat}
$\tilde{u}_\ell = \texttt{Vcycle}(\tilde{u}_\ell, \tilde{A}_\ell, f_\ell, \ell)$
\begin{algorithmic}[1]
\IF {$\ell=0$}
\STATE $\tilde{u}_0 = \tilde{A}_0^{-1}f_0$
\ELSE 
\STATE $\tilde{u}_\ell = \texttt{FGSsmoother}(\tilde{u}_\ell, \tilde{A}_\ell, f_\ell)$
\STATE $r_\ell = f_\ell - \texttt{Hmatvec}(\tilde{A}_\ell, \tilde{u}_\ell)$
\STATE $r_{\ell-1} = I_\ell^T \bm{r}_\ell$
\STATE $e_{\ell-1} = 0$ and $e_{\ell-1} = \texttt{Vcycle}(e_{\ell-1}, H_{\ell-1}, r_{\ell-1}, \ell-1)$
\STATE $\tilde{u}_\ell = \tilde{u}_\ell + I_\ell e_{\ell-1}$
\STATE $\tilde{u}_\ell = \texttt{BGSsmoother}(\tilde{u}_\ell, \tilde{A}_\ell, f_\ell)$
\ENDIF
\end{algorithmic}
\end{algorithm}

We first want to point out that on the the coarsest level $\ell=0$, we need to solve the linear system exactly because the size of the problem is very small compared with the size on the finest level.  This involves inverting the $\mathcal{H}$-Matrix $\tilde{A}_0$ and can be done efficiently by Hierarchical inversion and $\mathcal{H}$-Matrix LU decomposition methods. However, in order to keep our implementation simple, we choose a very coarse grid $\mathcal{T}_0$ to start with and the size of the problem is small enough so that the computational cost can be ignored and, therefore, the $\mathcal{H}$-Matrix approach is not needed, i.e., $\tilde{A}_0 = A_0$ in our implementation.  Standard Gaussian Elimination or LU decomposition for a dense matrix is used to solve the linear system on the coarsest grid exactly.  

Secondly, Algorithm \ref{alg:V-cycle-H-mat} uses matrix-vector multiplication based on the $\mathcal{H}$-Matrix format, i.e. the subroutine $\texttt{Hmatvec}$.  Such a matrix-vector multiplication has been widely discussed in the $\mathcal{H}$-Matrix literature, for example \cite{Bebendorf.M2008a}.  We also adopt the standard implementation here.  We assume that matrix $H$ is stored using the $\mathcal{H}$-Matrix format and we want to compute $\bm{y} = H\bm{x}$.  The algorithm is presented in Algorithm \ref{alg:H-mat-vec}.

\begin{algorithm}
\caption{Matrix-vector multiplication in $\mathcal{H}$-Matrix format} \label{alg:H-mat-vec}
$\bm{y} = \texttt{Hmatvec}(H, \bm{x})$
\begin{algorithmic}[1]
\IF {$H$ is full matrix}
\STATE $\bm{y} = H \bm{x}$
\ENDIF
\IF {$H$ is low rank approximation, i.e. it is stored in factorized form $H = CR^T$}
\STATE $\bm{y} = C (R^T \bm{x})$
\ENDIF
\IF {$H$ is stored in $2$ by $2$ block form, i.e., $H = \begin{pmatrix} H_{11} & H_{12} \\ H_{21} & H_{22} \end{pmatrix} $}
\STATE partition $\bm{x}$ as $\bm{x} = \begin{pmatrix} \bm{x}_1 \\ \bm{x}_2 \end{pmatrix}$
\STATE $\bm{y}_1 = \texttt{Hmatvec}(H_{11}, \bm{x}_1) + \texttt{Hmatvec}(H_{12}, \bm{x}_2)$
\STATE $\bm{y}_2 = \texttt{Hmatvec}(H_{21}, \bm{x}_1) + \texttt{Hmatvec}(H_{22}, \bm{x}_2)$
\STATE set $\bm{y} = \begin{pmatrix} \bm{y}_1 \\ \bm{y}_2 \end{pmatrix}$
\ENDIF
\end{algorithmic}
\end{algorithm}

Finally, we discuss the smoothers used in the GMG Algorithm \ref{alg:V-cycle-H-mat}.  We use Gauss-Seidel smoothers in our implementation.  The Subroutine $\texttt{FGSsmoother}$ is the implementation of forward Gauss-Seidel method and the subroutine $\texttt{BGSsmoother}$ is the implementation of backward Gauss-Seidel method.  Their implementations are given by Algorithm \ref{alg:H-mat-FGS}  and \ref{alg:H-mat-BGS}, respectively.  

\begin{algorithm}
\caption{Forward Gauss-Seidel smoother in $\mathcal{H}$-Matrix format} \label{alg:H-mat-FGS}
$\bm{x} = \texttt{FGSsmoother}(H, \bm{b}, \bm{x})$
\begin{algorithmic}[1]
\STATE $\bm{r} = \bm{b} - \texttt{Hmatvec}(H, \bm{x})$
\IF {$H$ is full matrix}
\STATE get the lower triangular part $L$ of $H$
\STATE $\bm{z} =  L^{-1} \bm{r}$
\STATE $\bm{x} = \bm{x} + \bm{z}$
\ENDIF 
\IF {$H$ is stored in $2$ by $2$ block form, i.e., $H = \begin{pmatrix} H_{11} & H_{12} \\ H_{21} & H_{22} \end{pmatrix} $}
\STATE partition $\bm{r}$ as $\bm{r} = \begin{pmatrix} \bm{r}_1 \\ \bm{r}_2 \end{pmatrix}$
\STATE $\bm{z}_1 = \texttt{FGSsmoother}(H_{11}, \bm{r}_1, 0)$
\STATE $\bm{r}_2 = \bm{r}_2 - \texttt{Hmatvec}(H_{21}, \bm{z}_1)$
\STATE $\bm{z}_2 = \texttt{FGSsmoother}(H_{22}, \bm{r}_2, 0)$
\STATE set $\bm{z} = \begin{pmatrix} \bm{z}_1 \\ \bm{z}_2 \end{pmatrix}$
\STATE $\bm{x} = \bm{x} + \bm{z}$
\ENDIF
\end{algorithmic}
\end{algorithm}

\begin{algorithm}
\caption{Backward Gauss-Seidel smoother in $\mathcal{H}$-Matrix format} \label{alg:H-mat-BGS}
$\bm{x} = \texttt{BGSsmoother}(H, \bm{b}, \bm{x})$
\begin{algorithmic}[1]
\STATE $\bm{r} = \bm{b} - \texttt{Hmatvec}(H, \bm{x})$
\IF {$H$ is full matrix}
\STATE get the upper triangular part $U$ of $H$
\STATE $\bm{z} =  U^{-1} \bm{r}$
\STATE $\bm{x} = \bm{x} + \bm{z}$
\ENDIF 
\IF {$H$ is stored in $2$ by $2$ block form, i.e., $H = \begin{pmatrix} H_{11} & H_{12} \\ H_{21} & H_{22} \end{pmatrix} $}
\STATE partition $\bm{r}$ as $\bm{r} = \begin{pmatrix} \bm{r}_1 \\ \bm{r}_2 \end{pmatrix}$
\STATE $\bm{z}_2 = \texttt{BGSsmoother}(H_{22}, \bm{r}_2, 0)$
\STATE $\bm{r}_1 = \bm{r}_1 - \texttt{Hmatvec}(H_{12}, \bm{z}_2)$
\STATE $\bm{z}_1 = \texttt{BGSsmoother}(H_{11}, \bm{r}_1, 0)$
\STATE set $\bm{z} = \begin{pmatrix} \bm{z}_1 \\ \bm{z}_2 \end{pmatrix}$
\STATE $\bm{x} = \bm{x} + \bm{z}$
\ENDIF
\end{algorithmic}
\end{algorithm}

Note that in the Gauss-Seidel smoother algorithms, we do not consider the case that $H$ is a low rank approximation and is stored in factorized form $H = CR^T$.  This is because in the Gauss-Seidel algorithm, only the diagonal entries will be inverted and, in our $\mathcal{H}$-Matrix representation for the FDEs, the diagonal entries are definitely stored explicitly in the full matrix format.  Therefore, we consider the cases  where $H$ is stored in either full matrix format or $2$ by $2$ block format, where the diagonal entries need to be accessed recursively. 

Based on Algorithm \ref{alg:H-mat-vec}, \ref{alg:H-mat-FGS}, and \ref{alg:H-mat-BGS}, the V-cycle multigrid Algorithm \ref{alg:V-cycle-H-mat} is well-defined.  It is easy to check that the overall computational complexity of one V-cycle is $\mathcal{O}(k N_\ell \log N_\ell)$ where $N_\ell$ is the number of degrees of freedom on the level $\ell$ and $k$ is the upper bound of the rank used in the low rank approximation for the $\mathcal{H}$-Matrix.  This is because the computational cost of matrix-vector multiplication for $\mathcal{H}$-Matrix is $\mathcal{O}(k N_\ell \log N_\ell)$ and it is well-known that the Gauss-Seidel method has roughly the same computational cost as the matrix-vector multiplication.  In practice, $k \ll N_\ell$ usually is a small number and does not depends on $N_\ell$.  Therefore, we can say that the computational cost of V-cycle multigrid on level $\ell$ is roughly $\mathcal{O}(N_\ell \log N_\ell)$.  Note that the computational complexity for solving the linear system of equations in the $\mathcal{H}$-Matrix format, such as Hierarchical inversion and $\mathcal{H}$-Matrix LU decomposition methods, is $\mathcal{O}(k^2 N_\ell \log^2 N_\ell) \approx \mathcal{O}(N_\ell \log^2 N_\ell)$ \cite{Bebendorf.M2008a}.  Therefore, the GMG approach is slightly better in terms of computational cost, especially for large-scale problems.

\section{Adaptive Finite Element Method for Fractional PDEs} \label{sec:AFEM-FPDEs}
In this section, we discuss the adaptive finite element method (AFEM) for solving FDEs.  We follow the idea of standard AFEM, which is characterized by the following iteration
\begin{equation*}
\texttt{SOLVE} \quad  \longrightarrow \quad  \texttt{ESTIMATE} \quad  \longrightarrow \quad  \texttt{MARK}  \quad  \longrightarrow \quad  \texttt{REFINE}.
\end{equation*}
Such iteration generates a sequence of discrete solutions converging to the exact one.  We want to emphasize that one of the main difficulties of applying the AFEM to the FDEs is the \texttt{SOLVE} step.  The AFEM iteration usually generates non-uniform grids, which makes the resulting linear system difficult to solve by existing fast linear solvers. This is because usually the traditional approaches take advantage of the uniform grid and the Toeplitz structure of the resulting stiffness matrices, which is not true for the non-uniform grid.  However, in this paper, by introducing the $\mathcal{H}$-Matrix approach and GMG method, we can efficiently solve the linear system obtained on the non-uniform grid and, therefore, design an AFEM method for FDEs so that each AFEM iteration has computational complexity $\mathcal{O}(k N \log N)$, suitable for practice applications.  Next, we will introduce the four modules in the AFEM iterations. 

\subsection{\texttt{SOLVE} Module} 
As usual, the \texttt{SOLVE} module should take the current grid as the input and output the corresponding finite element approximation.  Note here that the current grid in general is obtained by adaptive refinement and, therefore, it is an unstructured grid.  So, our \texttt{SOLVE} module will use the hierarchical matrix representation mentioned in Section \ref{sec:H-mat} to assemble the linear system of equations stored in $\mathcal{H}$-Matrix format and solve it by the GMG method discussed in Section \ref{sec:MG-Hmat}.  The detailed description is listed below. 

\begin{algorithm}
\caption{\texttt{SOLVE} module: solve FDES using FEM} \label{alg:solve} 
$\tilde{u}_\ell = \texttt{SOLVE}(\mathcal{T}_\ell)$
\begin{algorithmic}[1]
\STATE On current grid $\mathcal{T}_\ell$, assemble the linear system of equation $\tilde{A}_\ell \tilde{u}_\ell = f_\ell$ and stored it in $\mathcal{H}$-Matrix format
\STATE Solve $\tilde{u}_\ell$ by V-cycle GMG method (Algorithm \ref{alg:V-cycle-H-mat}) directly or preconditioned Conjugate Gradient method with V-cycle GMG as a preconditioner
\end{algorithmic}
\end{algorithm}

Let $N_\ell$ denote the number of degrees of freedoms on grid $\mathcal{T}_\ell$.  As discussed in Section \ref{sec:H-mat}, assembling the $\mathcal{H}$-Matrix on grid $\mathcal{T}_\ell$ costs $\mathcal{O}(k N_\ell \log N_\ell)$ operations and solving $U_\ell$ by the GMG method also costs $\mathcal{O}(k N_\ell \log N_\ell)$ operations.  Therefore, the overall cost of the \texttt{SOLVE} module is $\mathcal{O}(k N_\ell \log N_\ell)$ or $\mathcal{O}(N_\ell \log N_\ell)$ when $k \ll N_\ell$.  

\subsection{\texttt{ESTIMATE} Module}
Given a grid $\mathcal{T}_\ell$ and finite element approximation $\tilde{u}_\ell \in \mathcal{V}_\ell$, the \texttt{ESTIMATE} module computes a posteriori error estimators $\{ \eta_\ell( \tilde{u}_\ell, \tau) \}_{\tau \in \mathcal{T}_\ell}$, which should be computable on each element $\tau \in \mathcal{T}_\ell$ and indicate the true error.  Such a posteriori error estimators have been widely discussed in the AFEM literature for second order elliptic PDEs discretized by FEM \cite{Cascon.J;Kreuzer.C;Nochetto.R;Siebert.K2008,Nochetto.R;Siebert.K;Veeser.A2009}.   There are three major error estimators: residual based error estimator, gradient recovery based error estimator, and objective-oriented error estimator.  For FDES, to the best of our knowledge, studies on such a posteriori error estimators are very limited.  In this work, we will adopt the \emph{gradient recovery} based error estimators due to its simplicity and problem-independence.  A more detailed investigation on such an error estimator will be a subject of our future work.  The detailed description of the \texttt{ESTIMATE} module is listed below.

\begin{algorithm}
\caption{\texttt{ESTIMATE} Module: gradient recovery type a posteriori error estimator} \label{alg:estimate}
$\{ \eta_\ell(\tilde{u}_\ell, \tau) \}_{\tau \in \mathcal{T}_\ell} = \texttt{ESTIMATE}(\tilde{u}_\ell, \mathcal{T}_\ell)$
\begin{algorithmic}[1]
\STATE Compute $\nabla \tilde{u}_\ell$.
\STATE Compute the recovery gradient $\mathcal{G} \tilde{u}_\ell \in \mathcal{V}_{\ell} = \text{span} \{ \varphi^{\ell}_1, \cdots, \varphi^\ell_{N_\ell} \}$ 
\begin{equation*}
\mathcal{G} \tilde{u}_\ell := \sum_{i}^{N_\ell} (\mathcal{G} \tilde{u}_\ell)_i \varphi_i^{\ell}, \quad (\mathcal{G} \tilde{u}_\ell)_i := \frac{h^\ell_i \nabla \tilde{u}_\ell |_{[x^{\ell}_{i-1},x^\ell_{i}]} + h^{\ell}_{i+1} \nabla \tilde{u}_\ell |_{[x^{\ell}_{i},x^\ell_{i+1}]} }{h^{\ell}_i + h^\ell_{i+1}},
\end{equation*}
where $x^{\ell}_i$ are the nodes in the grid $\mathcal{T}_{\ell}$ and $h^\ell_i = x^\ell_i - x^{\ell}_{i-1}$.
\STATE Compute the error estimator on each element $\tau \in \mathcal{T}_\ell$
\begin{equation*}
\eta_\ell(\tilde{u}_\ell, \tau) := \| \nabla \tilde{u}_\ell - \mathcal{G} \tilde{u}_\ell  \|_{\tau}, \quad \tau \in \mathcal{T}_\ell.
\end{equation*}
\end{algorithmic}
\end{algorithm}

It is easy to check that the computational complexity of \texttt{ESTIMATE} module is $\mathcal{O}(N_\ell)$ because all the computations are done locally on the elements $\tau$ and on each element $\tau$, the operations are finite and independent of $N_\ell$.

After computing the error estimators on each element $\tau$, the overall error estimator $\eta_\ell(\tilde{u}_\ell, \mathcal{T}_\ell)$ can be computed by
\begin{equation*}
\eta_\ell(\tilde{u}_\ell, \mathcal{T}_\ell) := \left( \sum_{\tau \in \mathcal{T}_\ell} \eta_\ell(\tilde{u}_\ell, \tau) \right)^{\frac{1}{2}},
\end{equation*}
and $\eta_\ell(\tilde{u}_\ell, \mathcal{T}_\ell)$ will be used as the stopping criterion for the overall AFEM algorithm.  Once it is smaller than a given tolerance, the AFEM algorithm will terminate.  In general, by the triangular inequality, we have
\begin{equation*}
\| \nabla u - \nabla \tilde{u}_\ell \| \leq \| \nabla \tilde{u}_\ell - \mathcal{G} \tilde{u}_\ell \| + \| \nabla u -  \mathcal{G} \tilde{u}_\ell \| = \eta_\ell(\tilde{u}_\ell, \mathcal{T}_\ell) + \| \nabla u -  \mathcal{G} \tilde{u}_\ell \|.
\end{equation*}
If the last term $\| \nabla u -  \mathcal{G} \tilde{u}_\ell \|$ is a high order term (which can be shown for second elliptic PDEs \cite{Xu.J;Zhang.Z2004}) compared with $\eta_\ell(\tilde{u}_\ell, \mathcal{T}_\ell)$, then we can expect that $\eta_\ell(\tilde{u}_\ell, \mathcal{T}_\ell)$ provides a good estimation of the true error $\| \nabla u - \nabla \tilde{u}_\ell \|$ and, therefore, guarantees the efficiency of the overall AFEM algorithm.  For FDES, numerical experiments presented below suggest that $\| \nabla u -  \mathcal{G} \tilde{u}_\ell \|$ is indeed a high order term.  A more rigorous analysis on this topic will be our future work.

\subsection{\texttt{MARK} Module}
The \texttt{MARK} module selects elements $\tau \in \mathcal{T}_\ell$ whose local error $\eta_\ell(U_\ell, \tau)$ is relatively large and needs to be refined in the refinement.  This module is independent of the model problems and we can directly use the strategies developed for second order elliptic PDEs for FDES here.  In this work, we use the so-called D\"{o}flers marking strategy \cite{Nochetto.R;Siebert.K;Veeser.A2009} with the detailed algorithm listed below (Algorithm 7). 

\begin{algorithm}
\caption{\texttt{MARK} Module: D\'{o}fler's marking strategy} \label{alg:mark}
$\mathcal{M}_\ell = \texttt{MARK}(\mathcal{T}_\ell, \eta_\ell(\tilde{u}_\ell, \tau), \theta)$
\begin{algorithmic}[1]
\STATE Choose a subset $\mathcal{M}_\ell \subset \mathcal{T}_\ell$ such that
\begin{equation} \label{def:dofler-mark}
\eta_\ell(\tilde{u}_\ell, \mathcal{M}_\ell) \leq \theta \eta_\ell(\tilde{u}_\ell, \mathcal{T}_\ell),
\end{equation}
where $\eta_\ell(\tilde{u}_\ell, \mathcal{M}_\ell) := \left( \sum_{\tau \in \mathcal{M}_\ell} \eta_\ell(\tilde{u}_\ell, \tau) \right)^{\frac{1}{2}}.
$
\end{algorithmic}
\end{algorithm}

Here we require that the parameter $\theta \in (0, 1]$.  Obviously, the choice of $\mathcal{M}_\ell$ is not unique.  In practice, in order to reduce the computational cost, we prefer the size of the subset $\mathcal{M}_\ell$ to be as small as possible.  Therefore, we typically use the greedy approach in the implementation of \texttt{ESTIMATE} module.  We first order the elements $\tau$ according to the error indicators $\eta_\ell(U_\ell, \tau)$ from large to small and then pick the element $\tau$ in a greedy way so that condition \eqref{def:dofler-mark} will be satisfied with minimal number of the elements.

Based on the above discussion, the ordering could be done in $\mathcal{O}(N_\ell \log N_\ell)$ operations and picking the elements can be done in $\mathcal{O}(N_\ell)$ operations, therefore, the overall computational complexity of the \texttt{ESTIMATE} module is $\mathcal{O}(N_\ell \log N_\ell)$.

\subsection{\texttt{REFINE} Module}
The \texttt{REFINE} module is also problem independent.  It takes the marked elements $\mathcal{M}_\ell$ and current grid $\mathcal{T}_k$ as inputs and outputs a refined grid $\mathcal{T}_{\ell+1}$, which will be used as a new grid.  In this work, because we are only considering the 1D case, the refinement procedure is just bisection.  Namely, if an element $[x_{i-1}^\ell, x_{i}^{\ell}] \in \mathcal{T}_\ell$ is marked, it will be divided into two subintervals $[x_{i-1}^{\ell}, \bar{x}_i^\ell]$ and $[\bar{x}_i^\ell, x_{i}^{\ell}]$ by the midpoint $\bar{x}_i^\ell = (x^\ell_{i-1} + x^\ell_i)/2$.  The detailed algorithm is listed below (Algorithm 8). 

\begin{algorithm}
\caption{\texttt{REFINE} Module: bisection refinement} \label{alg:refine}
$\mathcal{T}_{\ell+1} = \texttt{REFINE}(\mathcal{T}_\ell, \mathcal{M}_\ell)$
\begin{algorithmic}[1]
\FOR {$\tau \in \mathcal{M}_\ell$}
\STATE refine $\tau$ using bisection and generate two new elements.
\ENDFOR
\STATE Combine all new elements and subset $\mathcal{T}_\ell \backslash \mathcal{M}_l$ to generate the new grid $\mathcal{T}_{\ell+1}$.
\end{algorithmic}
\end{algorithm}

Obviously, the computational cost of the \texttt{REFINE} module is at most $\mathcal{O}(N_\ell)$.  

\subsection{AFEM Algorithm} \label{sec:AFEM}
After discussing each module, now we can summarize our AFEM algorithm for solving FDES.  We assume that an initial grid $\mathcal{T}_0$, a parameter $\theta \in (0,1]$, and a targeted tolerance $\varepsilon$ are given.  The AFEM algorithm is listed in Algorithm \ref{alg:AFEM}.

\begin{algorithm}
\caption{Adaptive Finite Element Method for Solving FDES} \label{alg:AFEM}
$\tilde{u}_J = \texttt{AFEM}(\mathcal{T}_0, \theta, \varepsilon)$ 
\begin{algorithmic}[1]
\STATE Set $\ell = 0$
\LOOP
\STATE $\tilde{u}_\ell = \texttt{SOLVE}(\mathcal{T}_\ell)$
\STATE $\{ \eta_\ell(\tilde{u}_\ell, \tau) \}_{\tau \in \mathcal{T}_\ell} = \texttt{ESTIMATE}(\tilde{u}_\ell, \mathcal{T}_\ell)$
\IF { $\eta_\ell(\tilde{u}_\ell, \mathcal{T}_\ell)  \leq \varepsilon$ }
\STATE $J = \ell$ and $\tilde{u}_J : = \tilde{u}_\ell$.
\RETURN
\ENDIF
\STATE $\mathcal{M}_\ell = \texttt{MARK}(\mathcal{T}_\ell, \eta_\ell(\tilde{u}_\ell, \tau), \theta)$
\STATE $\mathcal{T}_{\ell+1} = \texttt{REFINE}(\mathcal{T}_\ell, \mathcal{M}_\ell)$
\STATE $\ell = \ell+1$
\ENDLOOP
\end{algorithmic}
\end{algorithm}

Obviously, the overall computational cost of each iteration of the AFEM method is $\mathcal{O}(k N_\ell \log N_\ell)$ or $\mathcal{O}(N_\ell \log N_\ell)$ when $k \ll N_\ell$.

\section{Numerical Examples} \label{sec:numerics}
In this section, we present some numerical experiments to demonstrate the efficiency and robustness of the proposed $\mathcal{H}$-Matrix representation, GMG method, and AFEM algorithm for solving the FDES.  All the codes are written in MATLAB and the tests are performed on a Macbook Pro Laptop with Inter Core i7 (3 GHz) CPU and 16G RAM.

 \begin{example} \label{exp1}
 Solving problem \eqref{Pro1} with exact solution $u(x)=10x^2(1-x^2)$ on [b,c]=[0,1].  The right hand side can be computed as
\begin{align*}
f(x)=&-\frac{10}{2\cos(\af\pi/2)}\left\{\frac{2}{\Gamma(3-\af)}\left[x^{2-\af}+(1-x)^{2-\af}\right]-\frac{12}{\Gamma(4-\af)}\left[x^{3-\af}+(1-x)^{3-\af}\right] \right. \\
&\left.+
\frac{24}{\Gamma(5-\af)}\left[x^{4-\af}+(1-x)^{4-\af}\right]  \right\}.
\end{align*}
\end{example}
It is easy to see that for Example \ref{exp1}, the solution $u(x)$ is smooth.  Therefore, the adaptive method is unnecessary for this example and we mainly use uniform grid and uniform refinement.  The purpose of this example is to show the accuracy of the $\mathcal{H}$-Matrix representation, the efficiency of the GMG method, and the overall optimal computational complexity of the proposed approach.   

\begin{figure}[ht!]
\caption{Example \ref{exp1} ($\alpha = 1.2$). Convergence comparison between full matrix approach and $\mathcal{H}$-Matrix representation.} \label{fig:example1-alpha12}
\centering
\includegraphics[width=0.48\textwidth]{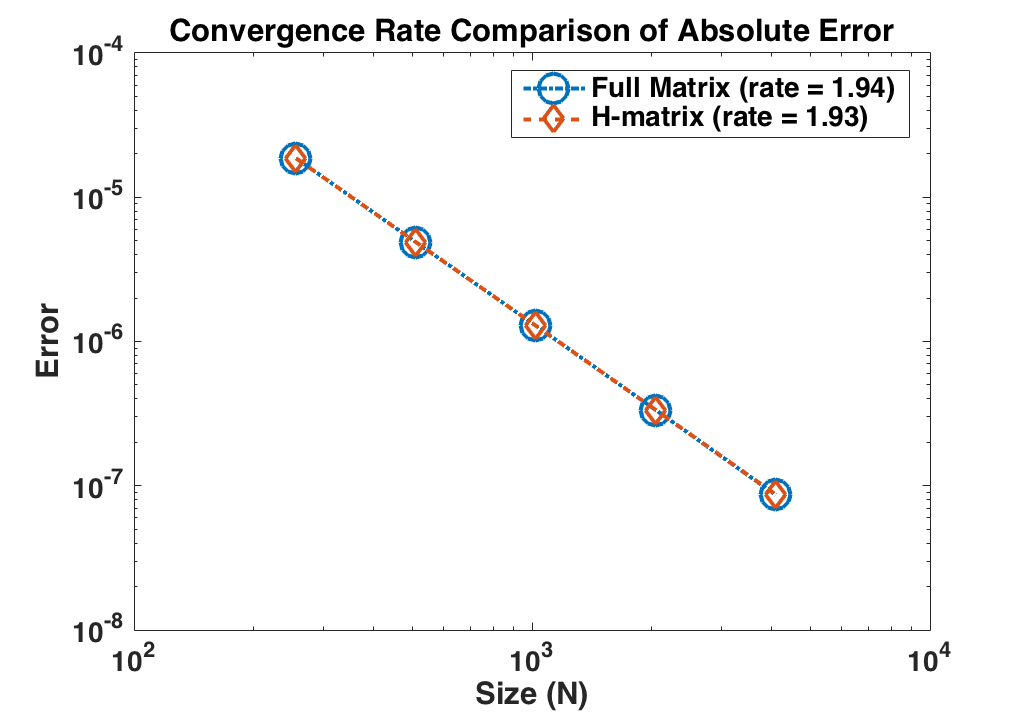} \
\includegraphics[width=0.48\textwidth]{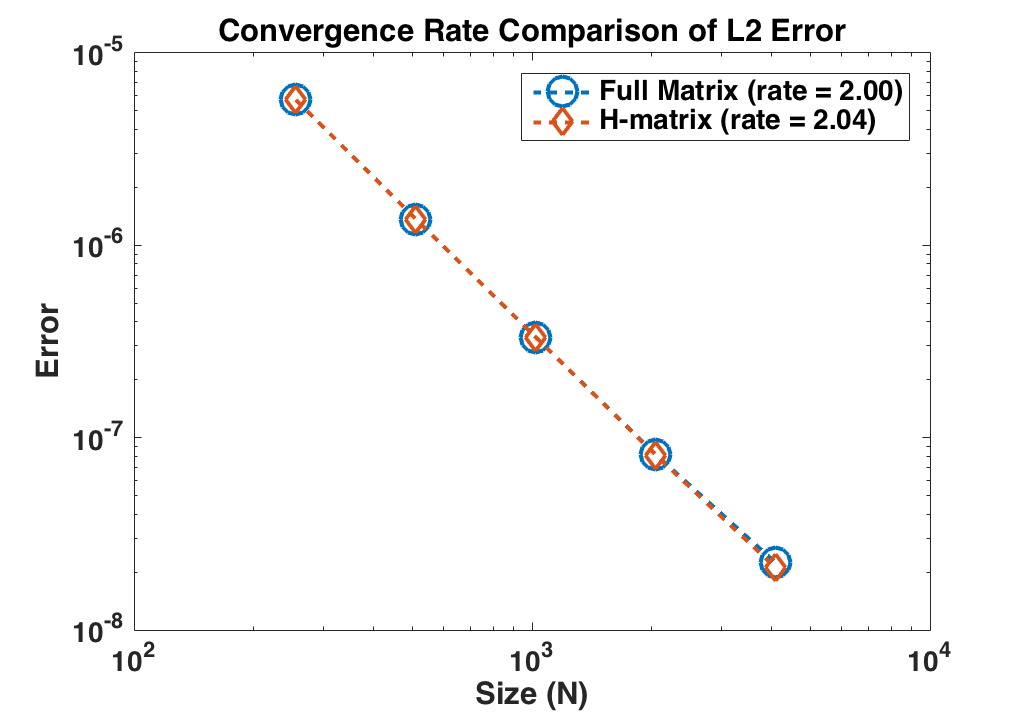}
\end{figure}

\begin{figure}[ht!]
\caption{Example \ref{exp1} ($\alpha = 1.5$). Convergence comparison between full matrix approach and $\mathcal{H}$-Matrix representation.} \label{fig:example1-alpha15}
\centering
\includegraphics[width=0.48\textwidth]{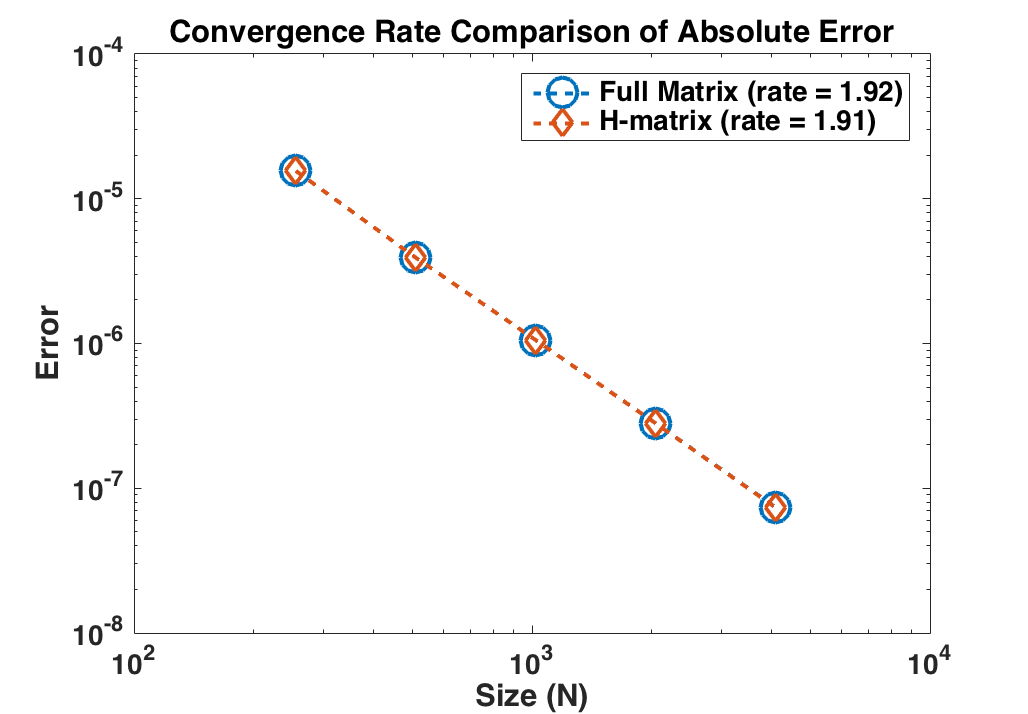} \
\includegraphics[width=0.48\textwidth]{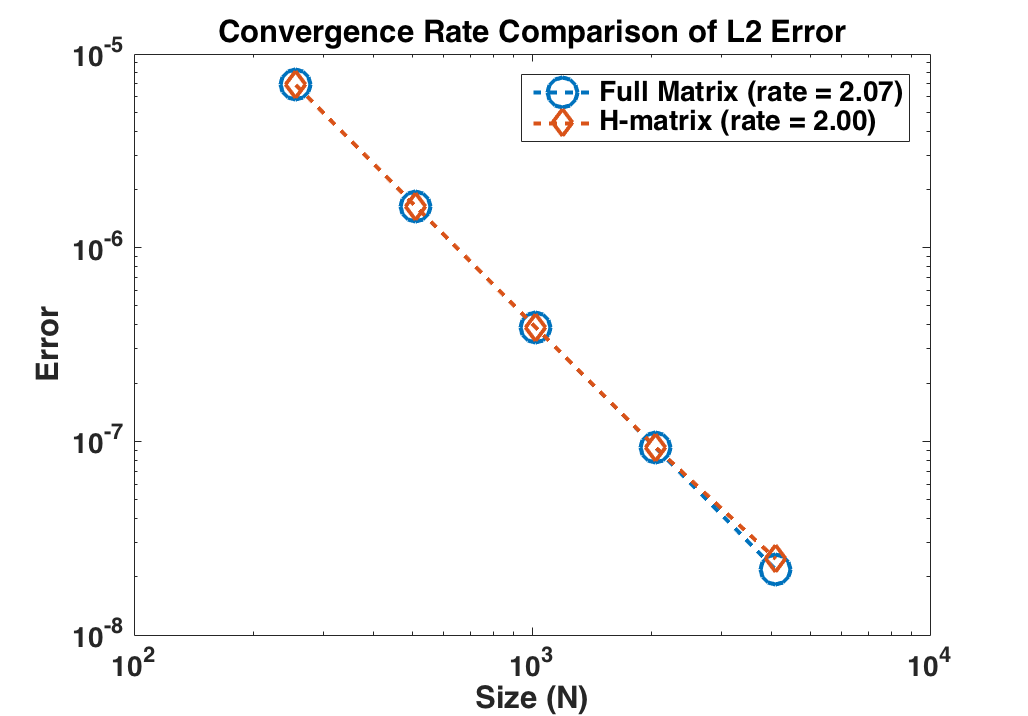}
\end{figure}


Figures \ref{fig:example1-alpha12} and \ref{fig:example1-alpha15} present the convergence behavior of the finite element approximations based on full matrix approach and $\mathcal{H}$-Matrix representation for different values of $\alpha$.  For the full matrix approach, we use a direct solver (LU decomposition) to solve the linear system of equations (command ``$\backslash$" in MATLAB) and, for the $\mathcal{H}$-Matrix, we solve the linear system of equations iteratively by the GMG method presented in Section \ref{sec:MG-Hmat}.  The stopping criterion is the relative residual to be less than $10^{-10}$.  The convergence rate of absolute error and $L^2$ error are presented.  In all our experiments, we use the fact that $\log E = -r \log N + \log C$ which is derived from $E = C N^{-r}$, where $E$ denotes error, and then compute the convergence rate $r$ by the linear polynomial fitting between $\log E$ and $\log N$.  In all cases, we can see that using the $\mathcal{H}$-Matrix representation, the convergence orders are still around $2$ which is optimal as expected.  Moreover, in all cases, the errors obtained by the $\mathcal{H}$-Matrix representation are also comparable with the errors obtained by using the full matrix.  The results show that using the $\mathcal{H}$-Matrix representation can still achieve the optimal convergence order and the accuracy of finite element approximations is still reliable. 

As mentioned before, the advantages of using $\mathcal{H}$-Matrix are not only the accuracy but also the fact that it significantly reduces the computational cost compared with the full matrix, especially when the GMG method is applied.  In Table \ref{tab:MG-iter}, the number of iterations for GMG method is shown for different mesh size $h$ and fractional index $\alpha$.  We can see that the number of iterations is quite stable for wide ranges of parameters $h$ and $\alpha$, which demonstrates the optimal convergence and robustness of the proposed GMG method in $\mathcal{H}$-Matrix format.  Moreover, in Figure \ref{fig:MG-compare}, we compare the CPU time of the LU decomposition for the full matrix and the GMG method for the $\mathcal{H}$-Matrix representation.  We can see that the computational cost of the GMG method behaves like $\mathcal{O}(N^{0.92})$, which is significantly better than the computational cost of the LU decomposition for full matrix ($\mathcal{O}(N^{2.68})$).  Theoretically, the GMG method based on the $\mathcal{H}$-Matrix representation costs $\mathcal{O}(N \log N)$.  Therefore, we can expect even bigger speedup when the problem size $N$ is increased.  We want to comment that, in Figure \ref{fig:MG-compare}, for relative small $N$, LU decomposition for the full matrix seems to be faster than the GMG method for the $\mathcal{H}$-Matrix.  This is because of the different implementations of LU decomposition and the GMG methods.  For LU decomposition, Matlab build-in command ``$\backslash$" is used which is based on the UMFPack package implemented in the Matlab. Our GMG method for the $\mathcal{H}$-Matrix is completely implemented in Matlab.  Our Matlab implementation actually outperforms the build-in command ``$\backslash$" for large size $N$, which is a strong evidence of the efficiency of the GMG method for the $\mathcal{H}$-Matrix.  
  
\begin{table}[htp]
\caption{Example \ref{exp1}: number of iterations of GMG method (stopping criterion: relative residual less than or equal to $10^{-10}$)} \label{tab:MG-iter}
\begin{center}
\begin{tabular}{|c|c|c|c|c|c|}
\hline \hline
		       & $h=1/256$ & $h=1/512$ & $h=1/1024$ & $h=1/2048$ & $h=1/4095$ \\ \hline
$\alpha = 1.1$ & 9 & 9 & 9 & 9 & 9 \\
$\alpha = 1.3$ & 10 & 10 & 10 & 10 & 11 \\
$\alpha = 1.5$ & 11 & 11 & 11 & 12 & 12 \\
$\alpha = 1.7$ & 12 & 12 & 12 & 12 & 13 \\
$\alpha = 1.9$ & 13 & 13 & 13 & 13 & 14 \\ \hline \hline
\end{tabular}
\end{center}
\end{table}%

\begin{figure}[ht!]
\caption{Example \ref{exp1} ($\alpha = 1.5$) CPU time comparison between full matrix (LU decomposition) and the $\mathcal{H}$-Matrix (multigrid)} \label{fig:MG-compare-alpha15} \label{fig:MG-compare}
\centering
\includegraphics[width=0.65\textwidth]{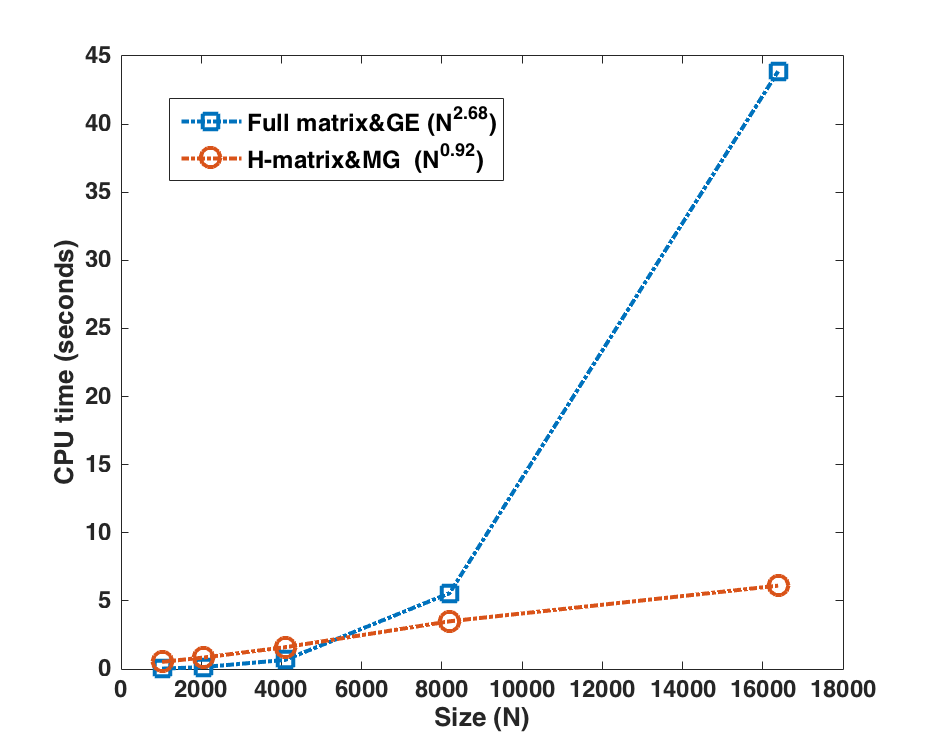}
\end{figure}

\begin{example} \label{exp:2}
Solving model problem \eqref{Pro1} with  $f(x) = -(1 + \sin(x))$.
\end{example}

The second example we consider here does not have exact solution.  However, due to the property of the FDES, we expect the solution to have singularities near the boundaries, which leads to degenerated convergence rate in the errors of finite element approximations on uniform grids.  This is confirmed by the numerical results as shown in Figure \ref{fig:exp2-convergence-rate-alpha12}.  The convergence rates of the $L^2$ errors for both full matrix and $\mathcal{H}$-Matrix approaches are about $1.2$, which reflects the singularity of the solution and the necessity for the AFEM method.  These comparisons show that the AFEM algorithm can achieve better accuracy with less computational cost, which demonstrates the effectiveness of the AFEM algorithm for FDES. 

\begin{figure}[ht!] \centering
\caption{Example \ref{exp:2} ($\alpha = 1.2$): Convergence rate comparison between full matrix and $\mathcal{H}$-Matrix representation on uniform grids.} \label{fig:exp2-convergence-rate-alpha12}
\includegraphics[width=0.6\linewidth]{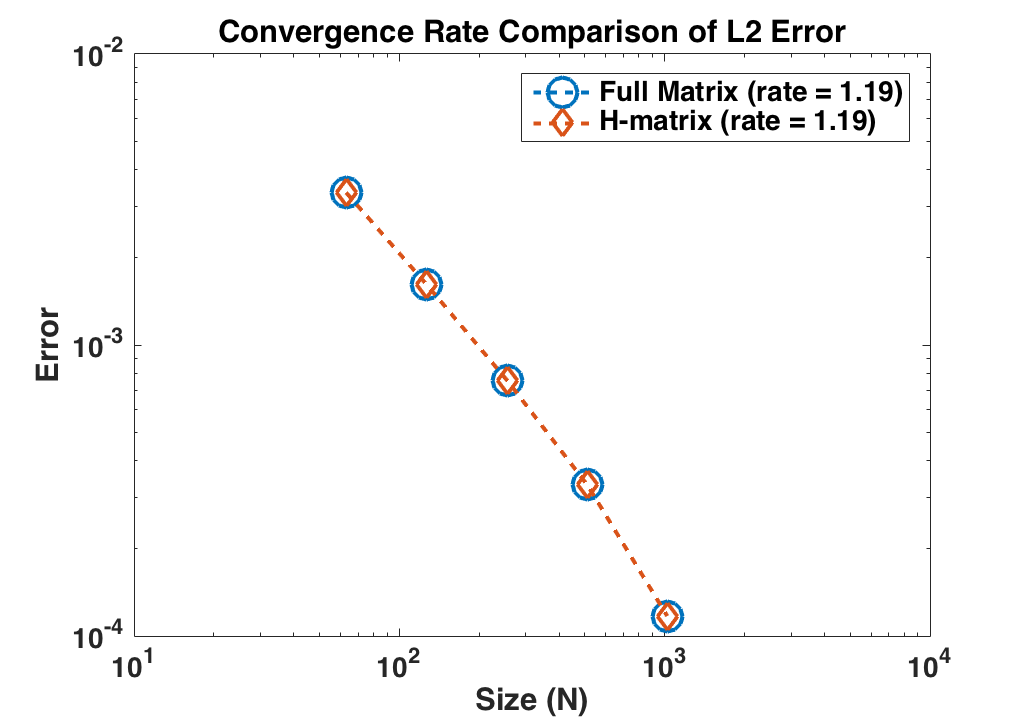}
\end{figure}

Next we apply the AFEM algorithm (Algorithm \ref{alg:AFEM}) to solve Example \ref{exp:2}.  The results are shown in Figures \ref{fig:exp2-AFEM-L2} and \ref{fig:exp2-AFEM-Linfty}.  We can see that, using the AFEM method, the optimal convergence rates of both $L^2$ error and $L^{\infty}$ have been recovered for both $\alpha = 1.3$ and $\alpha = 1.5$.  This demonstrates the effectiveness and robustness of the our AFEM methods.  In Figure \ref{fig:exp2-AFEM-mesh}, we plot the numerical solutions on adaptive meshes for both $\alpha = 1.3$ and $\alpha = 1.5$.  The adaptive refinement near the boundary points demonstrates that our error estimates captures the singularities well and overall robustness of our AFEM algorithm.

\begin{figure}[ht!] \centering
\caption{Example \ref{exp:2}: Convergence rate of $L^2$ errors comparison between FEM on uniform grid and AFEM (Left: $\alpha = 1.3$; Right: $\alpha = 1.5$).} \label{fig:exp2-AFEM-L2}
\includegraphics[width=0.48\linewidth]{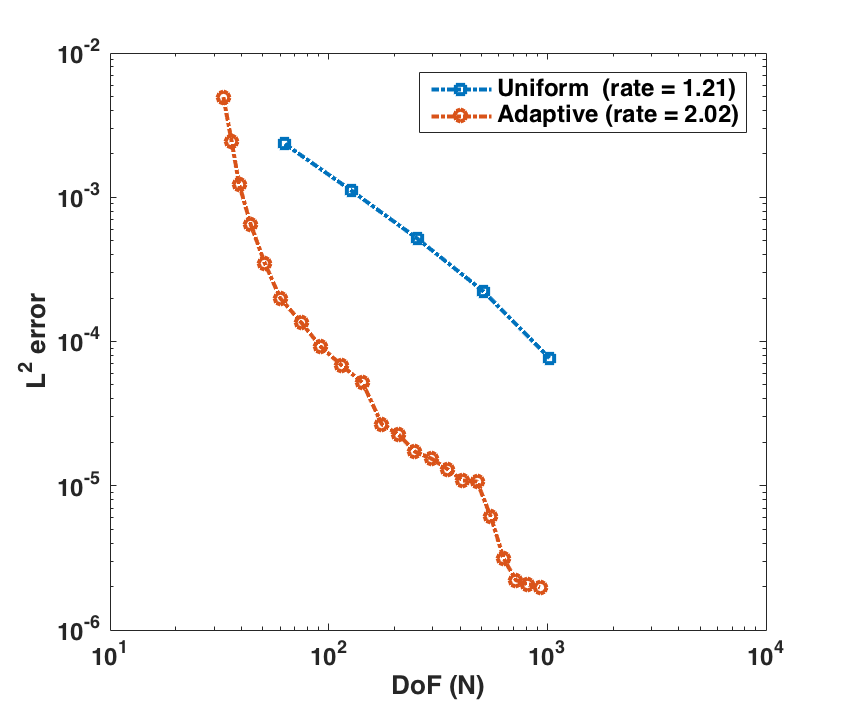}
\includegraphics[width=0.495\linewidth]{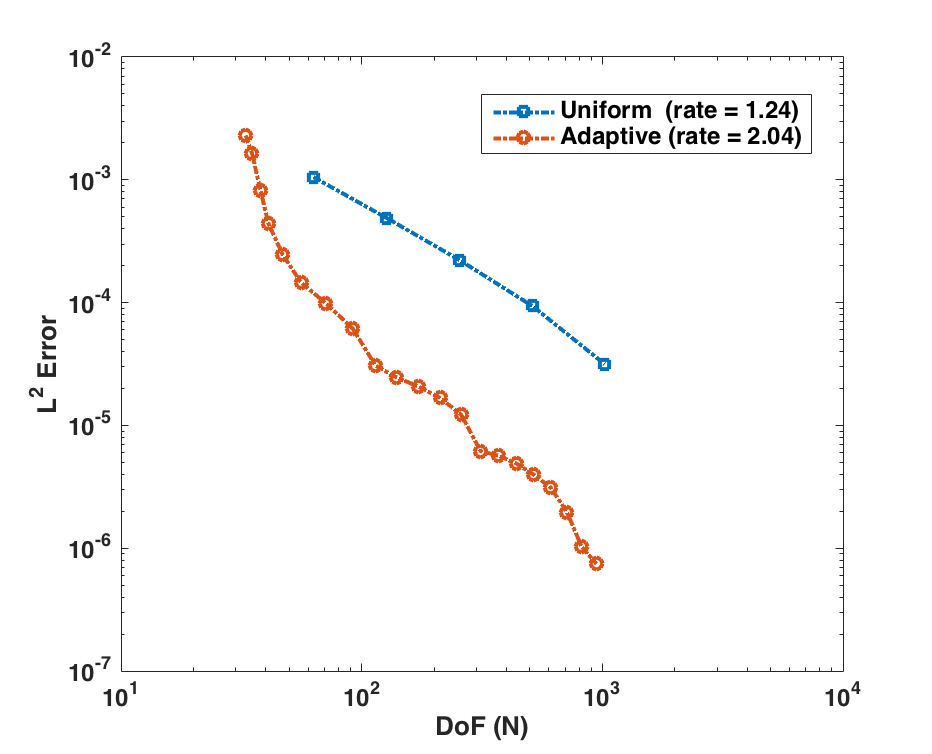}
\end{figure}

\begin{figure}[ht!] \centering
\caption{Example \ref{exp:2}: Convergence rate of $L^{\infty}$ errors comparison between FEM on uniform grid and AFEM (Left: $\alpha = 1.3$; Right: $\alpha = 1.5$).} \label{fig:exp2-AFEM-Linfty}
\includegraphics[width=0.48\linewidth]{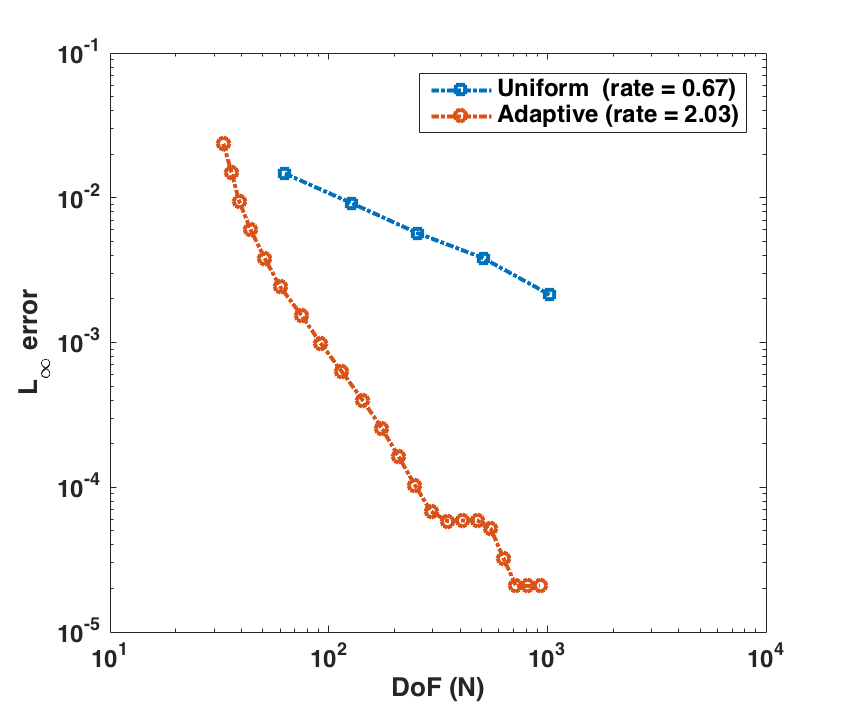}
\includegraphics[width=0.495\linewidth]{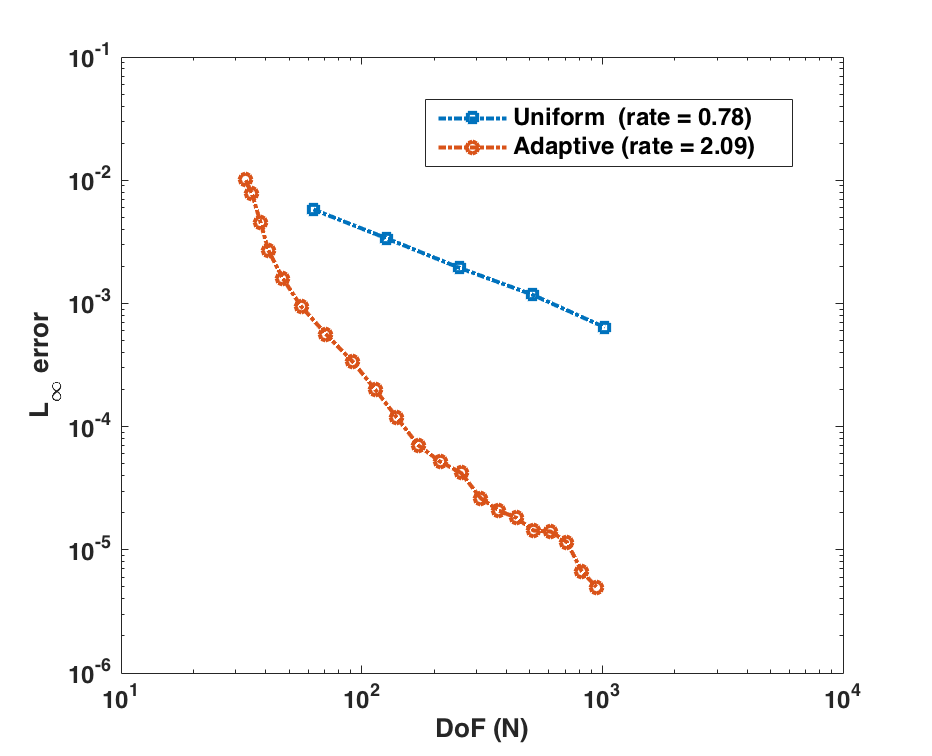}
\end{figure}

\begin{figure}[ht!] \centering
\caption{Example \ref{exp:2}: Numerical solutions on adaptive meshes (Left: $\alpha = 1.3$; Right: $\alpha = 1.5$).} \label{fig:exp2-AFEM-mesh}
\includegraphics[width=0.48\linewidth]{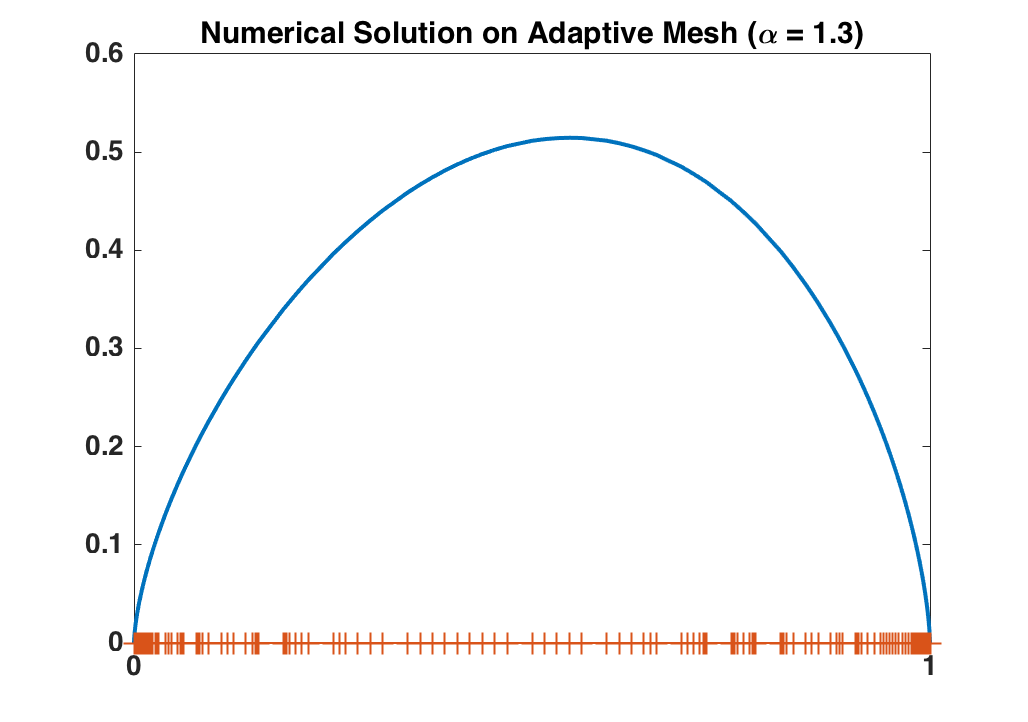} \
\includegraphics[width=0.48\linewidth]{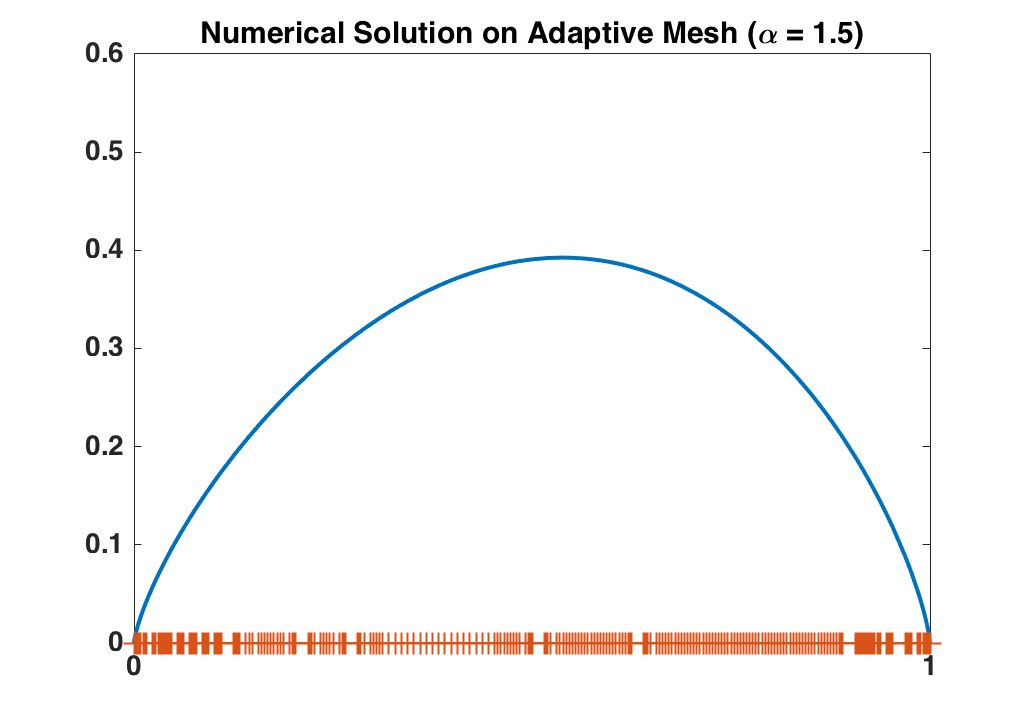}
\end{figure}

As mentioned in Section \ref{sec:AFEM-FPDEs}, one distinct feature of our proposed AFEM method is that in the \texttt{SOLVE} module, the
$\mathcal{H}$-Matrix representation and the multigrid method are used, hence providing nearly optimal computational complexity $\mathcal{O}(N \log N)$.  In Figure \ref{fig:GMG-AFEM-compare}, we show the CPU time of the GMG method for the $\mathcal{H}$-Matrix used in the \texttt{SOLVE} module for different fractional orders.  We can see that, for all cases, the computational complexity is optimal, which confirms our expectation.

\begin{figure}
\centering
\caption{Example \ref{exp:2}: CPU time of GMG method for $\mathcal{H}$-Matrix on nonuniform grid (different fractional index $\alpha$)} \label{fig:GMG-AFEM-compare}
\includegraphics[width=0.7\linewidth]{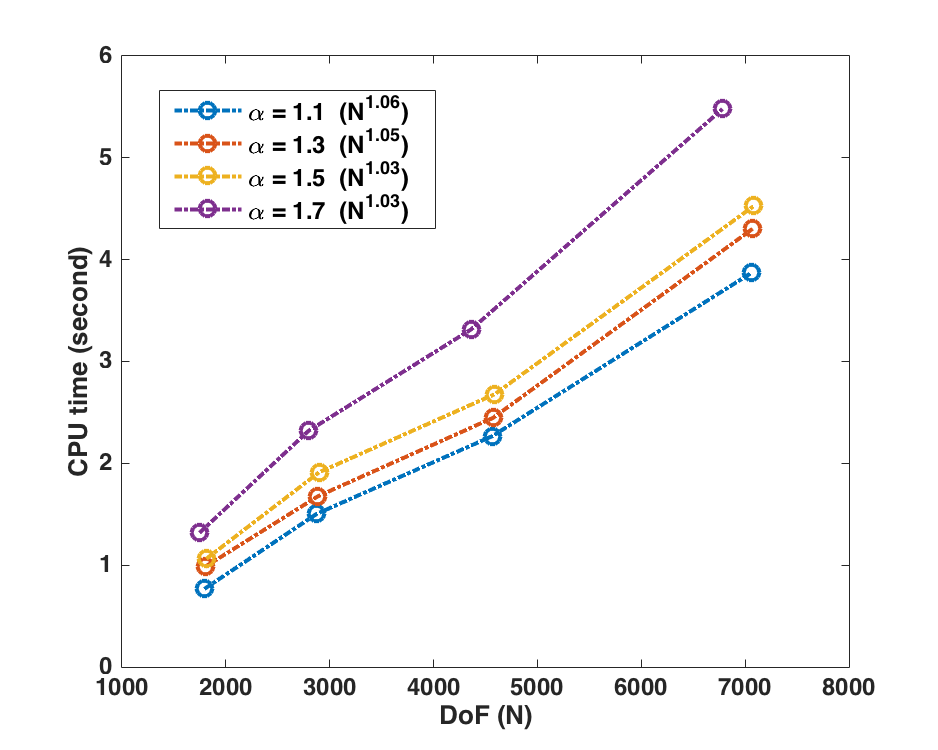}
\end{figure}

Next we compare the computational costs of FEM on uniform grids and AFEM.  The results are shown in Table \ref{tab:uniform-adaptive-time-compare}.  Here ``DoFs" means the degrees of freedom.  For AFEM, we start the adaptive refinement from a coarse grid of size $32$.  For AFEM, ``Total DoFs" means the sum of the DoFs of all the adaptive grids starting from the coarse grid to current adaptive grid and ``Total Time" means the total CPU time of the whole AFEM algorithm while ``Time" means the CPU time of solving the FDES on the current adaptive grid.  For FEM on a uniform grid of size $16,383$, the $L^2$ error is $1.89 \times 10^{-6}$ and the CPU time is about $49.31$ seconds.  However, if we use AFEM, solving the problem on an adaptive grid of size $1,059$ leads to accuracy $7.07 \times 10^{-7}$ in the $L^2$ norm. This means that the AFEM achieves about $2.7$ times better results using a $15.5$ times smaller grid.  Even the total DoFs, which is $6,382$, is about $2.6$ times smaller than the size of the uniform grid.  The speed up is about $18.3$ if we only consider the final adaptive grid and is about $3.2$ if we consider the whole AFEM procedure.  

\begin{table}[htp]
\caption{Example \ref{exp:2} ($\alpha = 1.5$): Computational cost comparison between FEM on uniform grids and AFEM. (The time unit is second)} \label{tab:uniform-adaptive-time-compare}
\centering
\begin{tabular}{|c|c|c||c|c|c|c|c|}
\hline \hline
\multicolumn{3}{|c||}{Uniform} & \multicolumn{5}{|c|}{Adaptive} \\ \hline 
$L^2$ error & DoFs & Time & $L^2$ error & DoFs & Time & Total DoFs & Total Time \\ \hline \hline
$5.84 \times 10^{-5}$ & $1,023$ & $3.55$ & $3.40 \times 10^{-5}$ & $116$ & $0.28$ & $490$ & $1.03$ \\ 
$1.30 \times 10^{-5}$ & $4,095$ & $13.57$ & $9.86 \times 10^{-6}$ & $272$ & $0.59$ & $1,312$ & $2.82$ \\ 
$1.89 \times 10^{-6}$ & $16,383$ & $49.31$ & $7.07 \times 10^{-7}$ & $1,059$ & $2.70$ & $6,382$ & $15.52$ \\ 
\hline \hline
\end{tabular}
\end{table}%

In Figure \ref{fig:AFEM-breakdown}, we report the breakdown of computational cost of the AFEM on the finest adaptive grid.  As we can see, the \texttt{SOLVE} module (Assembling and $\mathcal{H}$-Matrix \& MG Solve) dominates the whole AFEM algorithm.  The computational cost of the other three modules, \texttt{ESTIMATE} module (Estimate Error), \texttt{MARK} module (Mark), and \texttt{REFINE} module (Adaptive Refine), are roughly the same and could be ignored compared with the \texttt{SOLVE} module.  This is mainly because our experiments are in 1D in the current paper.  We can expect those three modules to become more and more time consuming in 2D and 3D cases.  However, the \texttt{SOLVE} module should still be the dominant module in terms of computational cost and that is why we introduce the $\mathcal{H}$-Matrix and the GMG method together to make sure that we achieve optimal computational complexity.  

\begin{figure}
\centering
\caption{Example \ref{exp:2} ($\alpha = 1.5$): Breakdown of CPU time of AFEM }
\label{fig:AFEM-breakdown}
\smallskip
\includegraphics[width=0.8\linewidth]{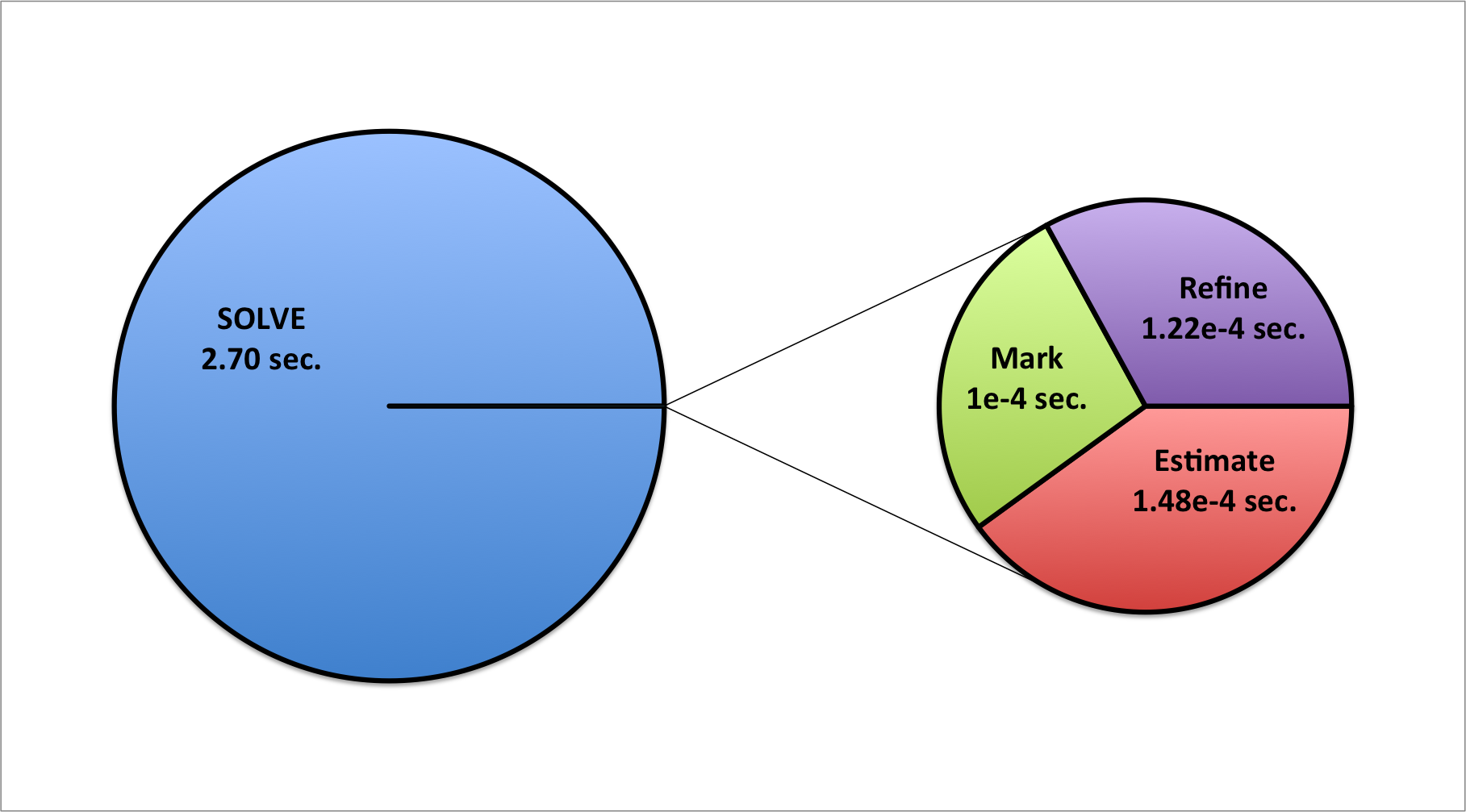}
\end{figure}

\begin{example} \label{exp:3}
We consider the following fractional boundary value problem with endpoint singularities
\begin{align}
&-(\kappa_1 \prescript{RL}{0}{\mathcal{D}}_{x}^{\af}+\kappa_2 \prescript{RL}{x}{\mathcal{D}}_{1}^{\af})u(x)=1+\sin(x),\quad x\in(0,1),\\
&u(0)=0,\quad u(1)=0.
\end{align}
\end{example}
If we choose $\kappa_1=\kappa_2=\frac1{2\cos(\af \pi/2)},$ Example \ref{exp:3} is the same as  Example \ref{exp:2}. The previous numerical  results are moderately good due to the symmetry of the Riesz fractional operator, which leads to the partial cancellation of the singularity of the solution.  In this example, we investigate further the singularities induced by the fractional operators by choosing $\kappa_1/\kappa_2$ sufficiently large (or small) which leads to stronger singularity at the left (or right) endpoint.  More precisely, we fix $\alpha = 1.5$ and choose two sets of $\kappa_1$ and $\kappa_2$, i.e. $\kappa_1 = \frac{1}{2\cos(\af \pi/2)}$ and $\kappa_2 = 0$, $\kappa_1 = \frac{1}{2\cos(\af \pi/2)}$ and $\kappa_2 = \frac{0.1}{2\cos(\af \pi/2)}$.

\begin{figure}[ht!] \centering
\caption{Example \ref{exp:3}: $L^2$ errors versus number of degrees-of-freedom and the comparison between FEM on uniform grid and AFEM (Left: $\kappa_1 = \frac{1}{2\cos(\af \pi/2)}$ and $\kappa_2 = 0$; Right: $\kappa_1 = \frac{1}{2\cos(\af \pi/2)}$ and $\kappa_2 = \frac{0.1}{2\cos(\af \pi/2)}$).} \label{fig:exp3-AFEM-L2}
\includegraphics[width=0.48\linewidth]{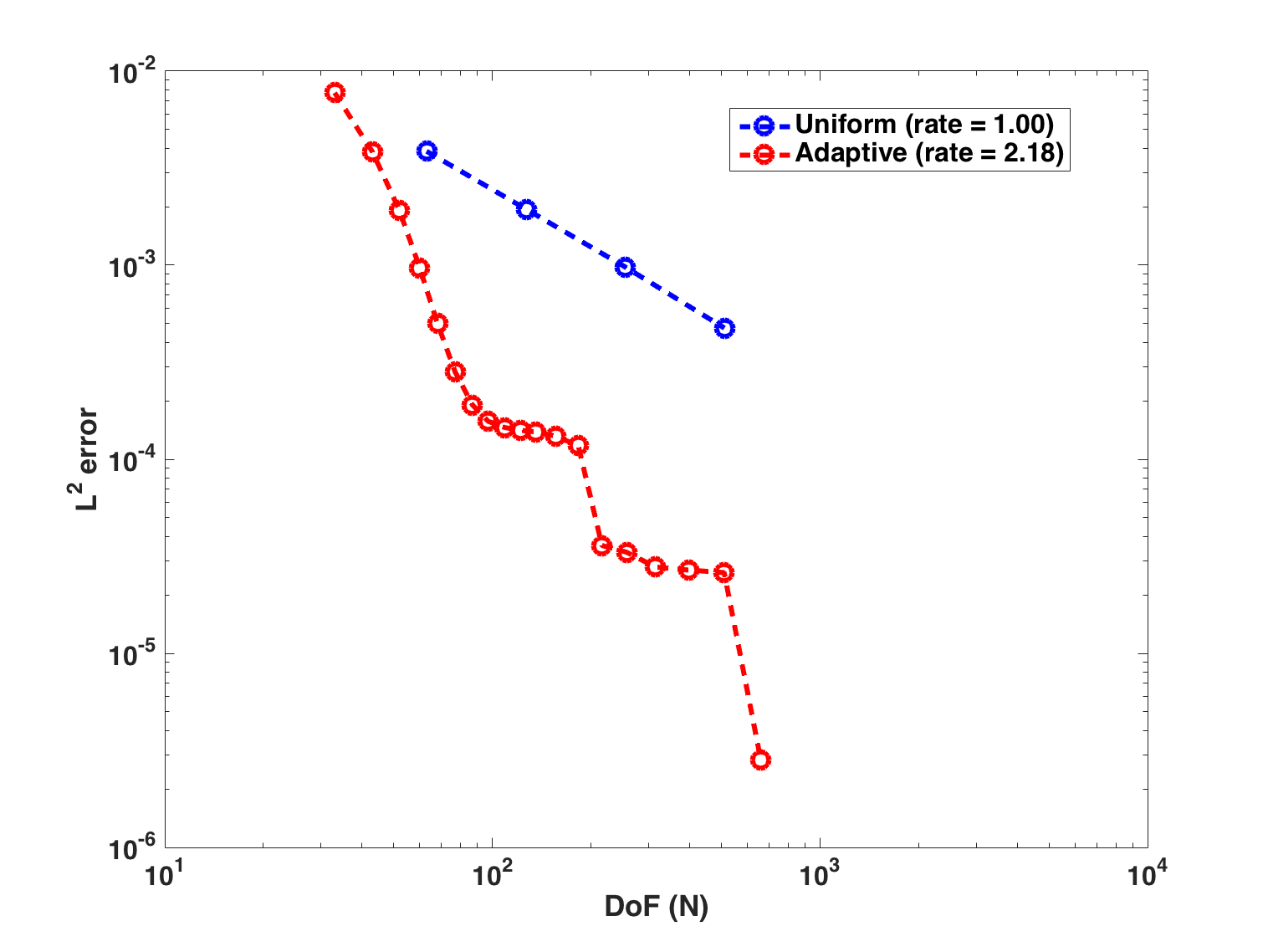}
\includegraphics[width=0.48\linewidth]{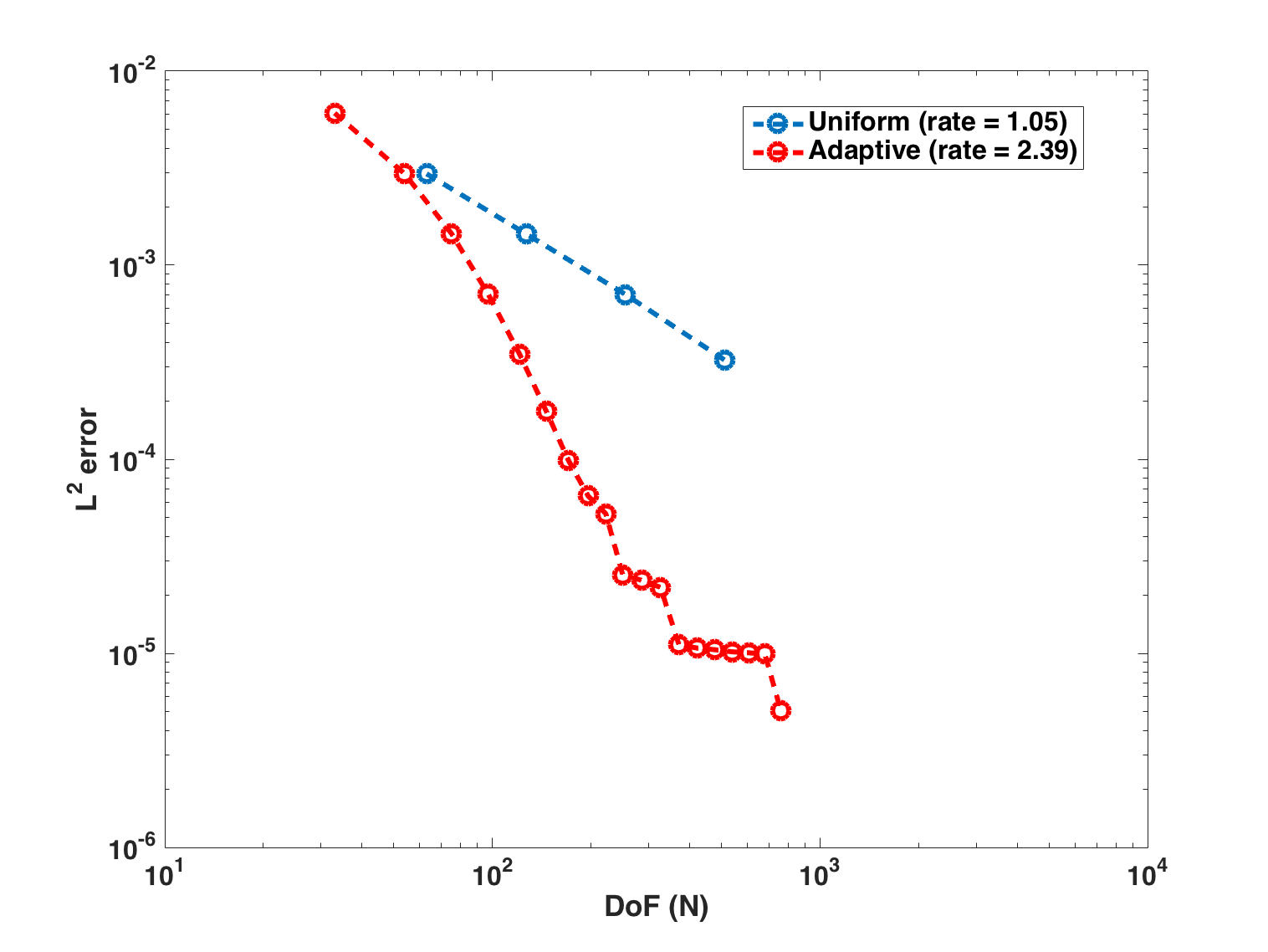}
\end{figure}

\begin{figure}[ht!] \centering
\caption{Example \ref{exp:3}: $L^{\infty}$ errors versus number of degrees-of-freedom and the comparison between FEM on uniform grid and AFEM (Left: $\kappa_1 = \frac{1}{2\cos(\af \pi/2)}$ and $\kappa_2 = 0$; Right: $\kappa_1 = \frac{1}{2\cos(\af \pi/2)}$ and $\kappa_2 = \frac{0.1}{2\cos(\af \pi/2)}$).} \label{fig:exp3-AFEM-Linfty}
\includegraphics[width=0.48\linewidth]{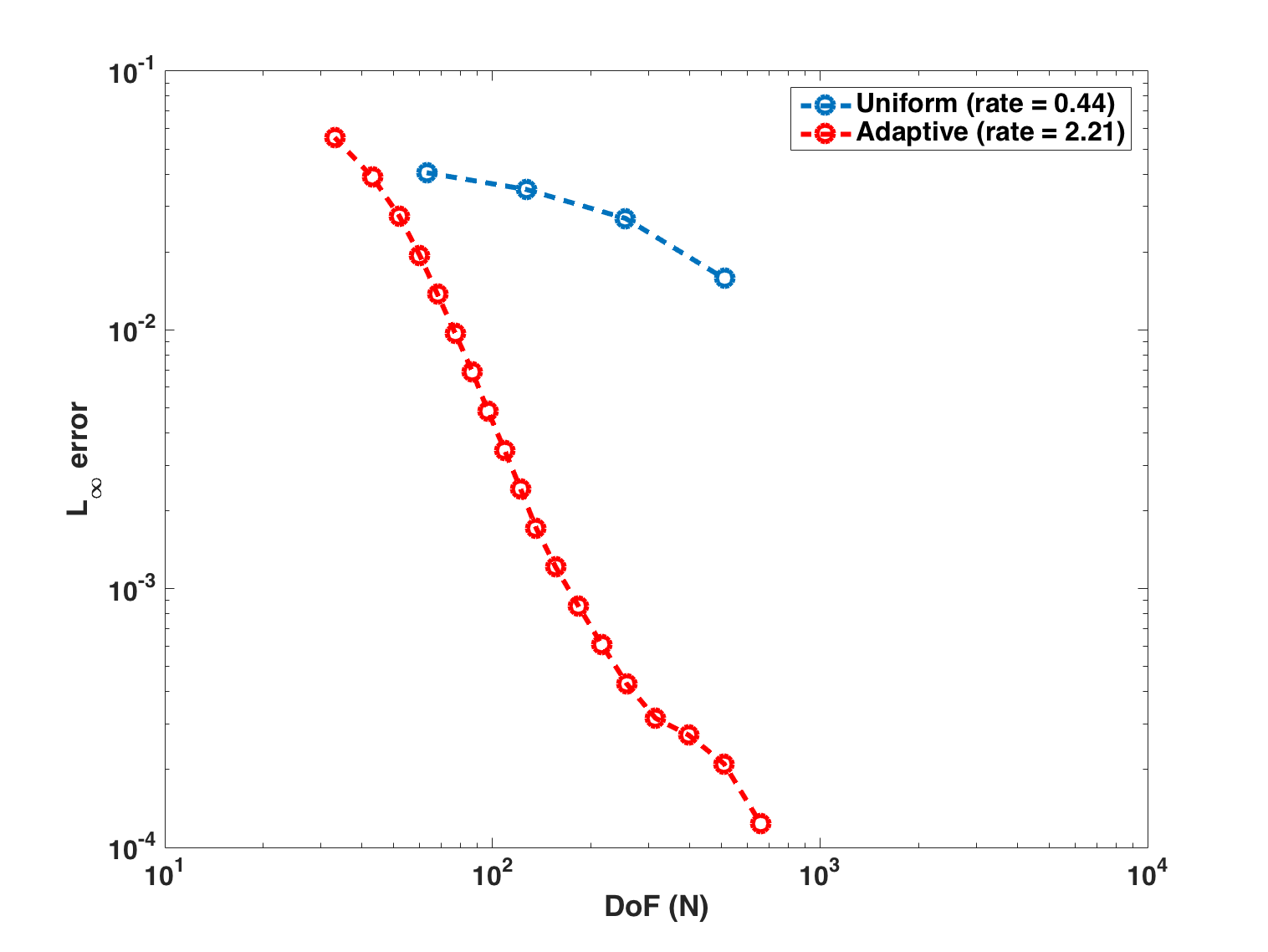}
\includegraphics[width=0.48\linewidth]{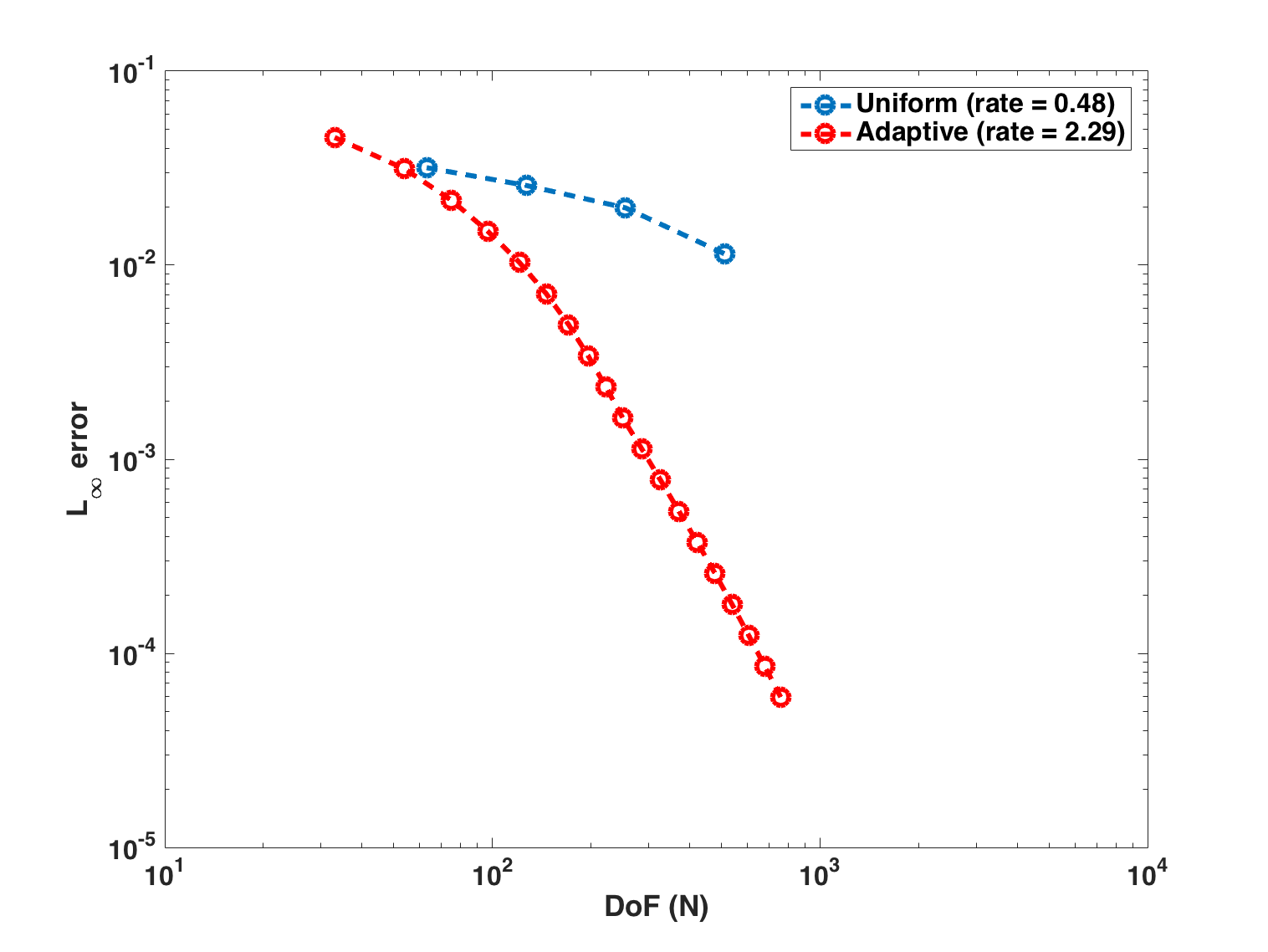}
\end{figure}

\begin{figure}[ht!] \centering
\caption{Example \ref{exp:2}: Numerical solutions on adaptive meshes (Left: $\kappa_1 = \frac{1}{2\cos(\af \pi/2)}$ and $\kappa_2 = 0$; Right: $\kappa_1 = \frac{1}{2\cos(\af \pi/2)}$ and $\kappa_2 = \frac{0.1}{2\cos(\af \pi/2)}$).} \label{fig:exp3-AFEM-mesh}
\includegraphics[width=0.48\linewidth]{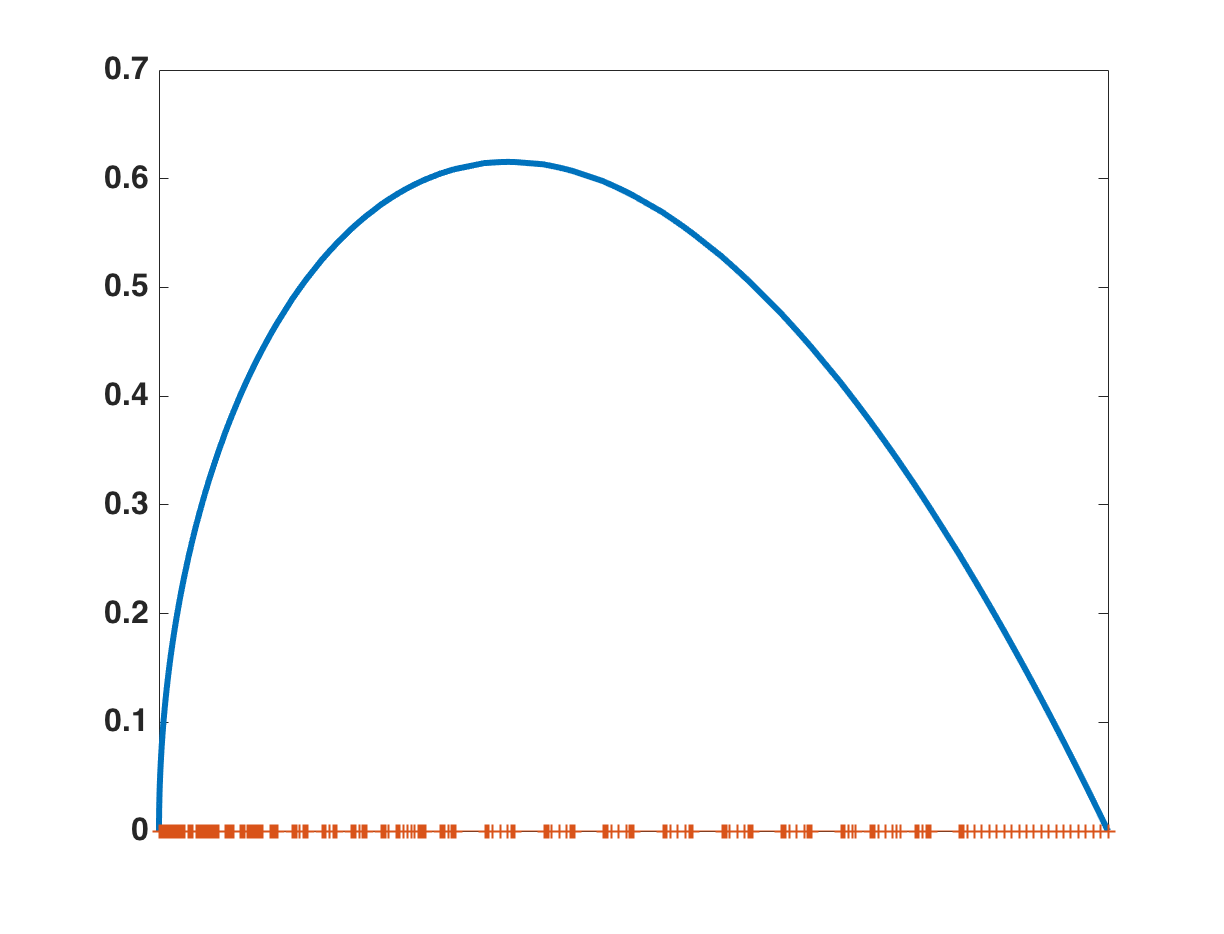}
\includegraphics[width=0.48\linewidth]{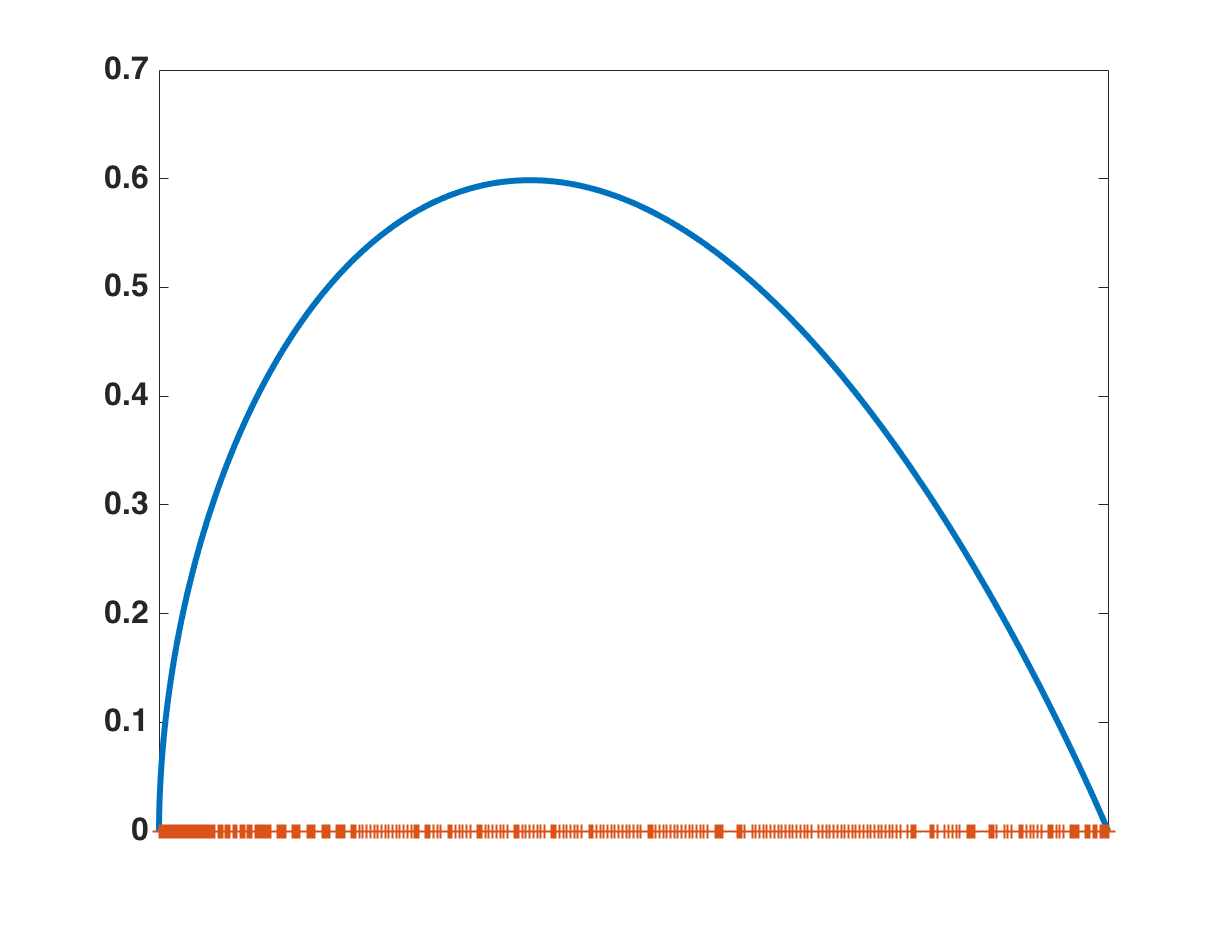}
\end{figure}

Figures \ref{fig:exp3-AFEM-L2} and \ref{fig:exp3-AFEM-Linfty} show the $L^2$ and $L^{\infty}$ errors of FEM on uniform grid and adaptive grids.  We can see that FEM on uniform grid suffers severe convergence degradation for both cases while AFEM recovers that the optimal convergence rate.   In Figure \ref{fig:exp3-AFEM-mesh}, we plot the numerical solution on adaptive meshes to show the refined grid at the left end point because in both cases $\kappa_1 \gg \kappa_2$.  This demonstrates the efficiency of our error estimator and adaptive algorithm.

\begin{example} \label{exp:4}
We consider the following fractional boundary value problem with nonhomogeneous boundary conditions and low order term.
\begin{align}
&\mathcal{D}_{x}^{\af}u(x) - \lambda^2 u=-(1+\sin(x)),\quad x\in(0,1),\\
&u(0)=0,\quad u(1)=1.
\end{align}
\end{example}
We use this example to demonstrate that our approach can be easily generalized to handle nonhomogeneous boundary conditions as well as a low order term.  In this numerical experiment, we choose $\alpha = 1.5$ and $\lambda = 0.5$.

\begin{figure}[ht!] \centering
\caption{Example \ref{exp:4}: Convergence rate of $L^2$ error (left) and $L^{\infty}$ error (right) comparisons between FEM on uniform grid and AFEM ($\alpha = 1.5$).} \label{fig:exp4-compare}
\includegraphics[width=0.48\linewidth]{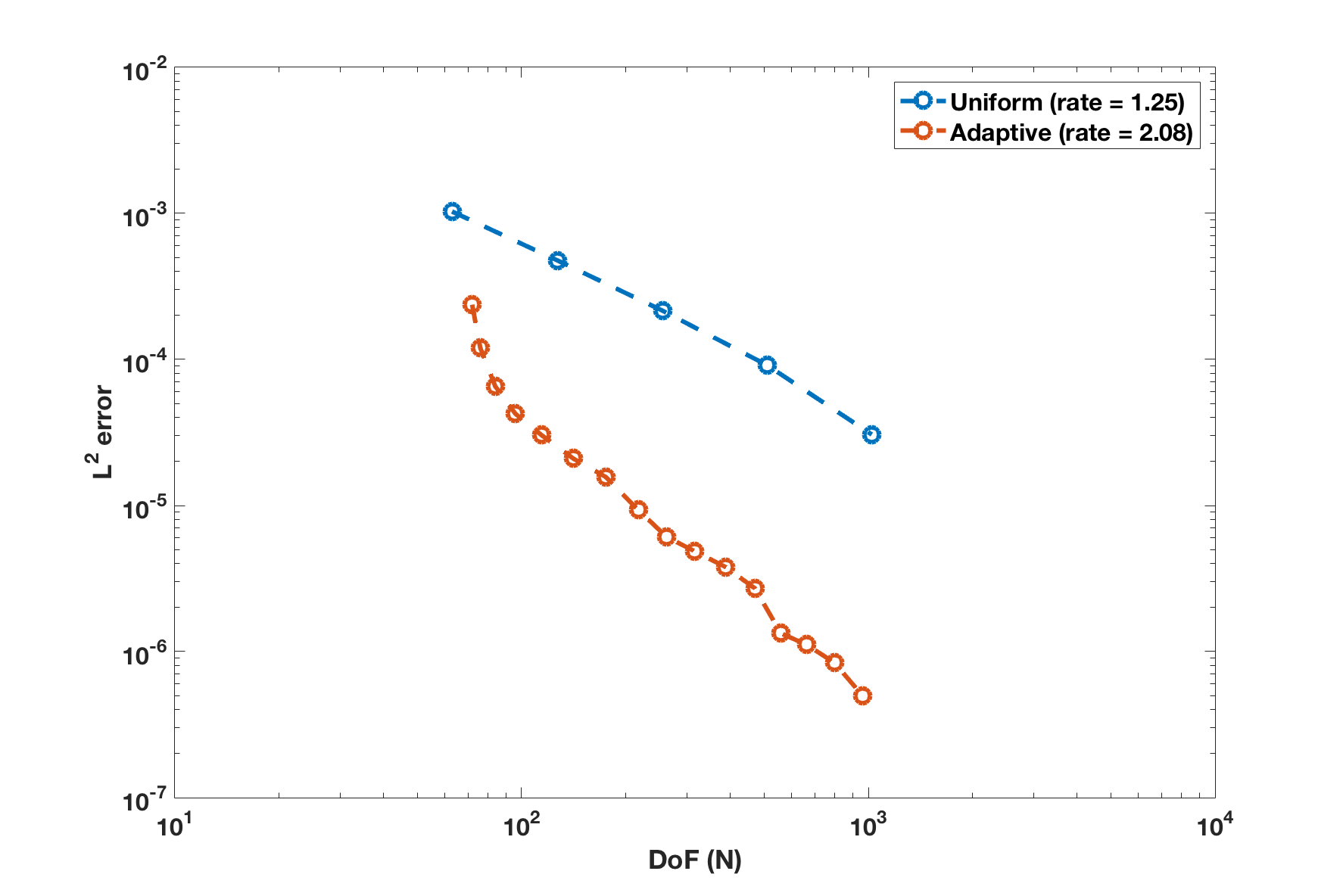}
\includegraphics[width=0.48\linewidth]{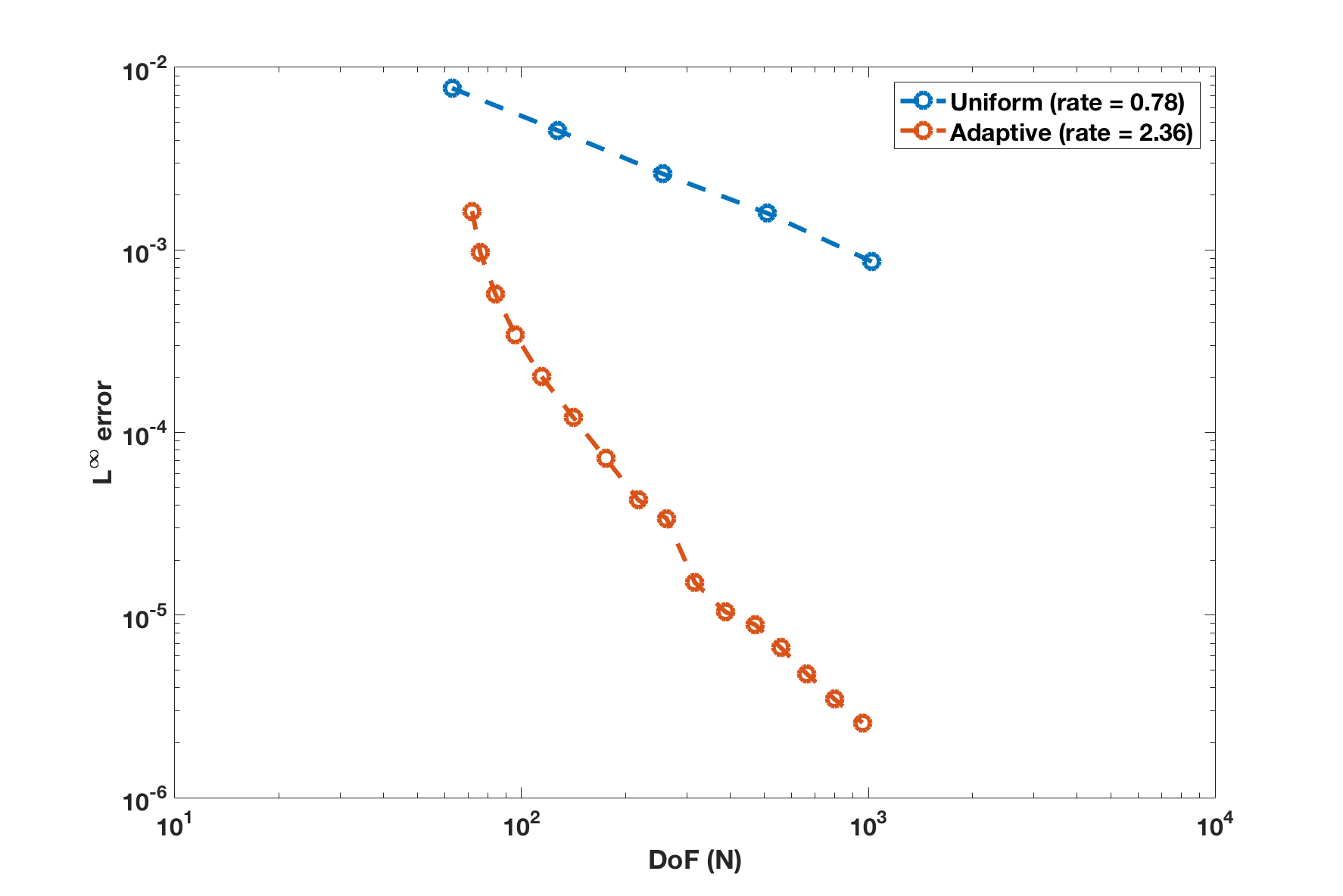}
\end{figure}

\begin{figure}
\centering
\caption{Example \ref{exp:4}: Numerical solution on adaptive mesh ($\alpha = 1.5$).} \label{fig:exp4-solution}
\includegraphics[width=0.7\linewidth]{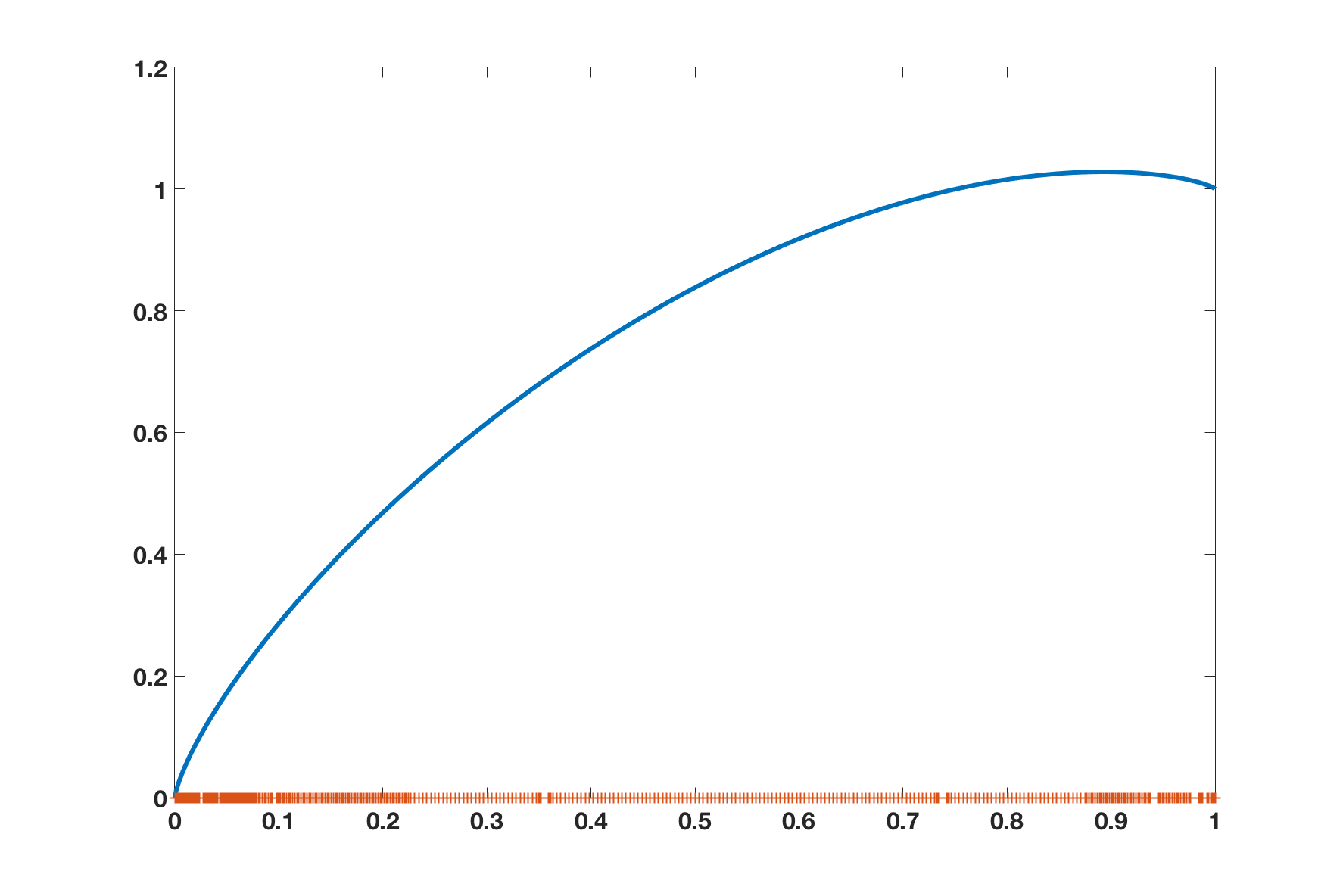}
\end{figure}

From Figure~\ref{fig:exp4-compare}, we can see that, on the uniform mesh, the convergence rates of both $L^2$ and $L^{\infty}$ errors are not optimal.  However, using the AFEM method, the optimal convergence rates of both $L^2$ and $L^{\infty}$ errors have been recovered, which demonstrates the effectiveness and robustness of the our AFEM methods.  We plot the numerical solutions on the adaptive mesh in Figure~\ref{fig:exp4-solution} and we can clearly see the adaptive refinement near the boundaries which demonstrates that our error estimates captures the singularities well, hence pointing to the overall robustness of our AFEM algorithm.

\section{Conclusion}

In this paper, we presented an adaptive FEM (AFEM) method for a fractional differential equation with Riesz derivative, targeting in particular non-smooth solutions that they may arise even in the presence of smooth right-hand-sides in the equation. To this end, uniform grids result in suboptimal and in fact sub-linear convergence rate, while the AFEM yields optimal second-order accuracy. The demonstrated efficiency of the method is based on combining two effective ideas, which act synergistically. First, we approximated the singular kernel in the fractional derivative using an H-matrix representation, and second, we employed a geometric multigrid method with linear overall computational complexity. In the current paper, we developed these ideas for the one-dimensional case but the greater challenge is to consider higher dimensions, where adaptive refinement has to resolve both solution singularities and geometric singularities around the boundaries.

\section*{References}

\end{document}